\newcolumntype{L}[1]{>{\raggedright\let\newline\\\arraybackslash\hspace{0pt}}m{#1}}
\newcolumntype{C}[1]{>{\centering\let\newline\\\arraybackslash\hspace{0pt}}m{#1}}
\newcolumntype{R}[1]{>{\raggedleft\let\newline\\\arraybackslash\hspace{0pt}}m{#1}}
\DeclareMathOperator*{\esssup}{ess\,sup}
\DeclareMathOperator*{\essinf}{ess\,inf}
\newcommand{\supp}{\mathrm{supp}}
\newcommand{\id}{\mathbb{1}}
\newcommand{\Id}{\mathrm{Id}}
\newcommand{\opnorm}[1]{{\left\id\kern-0.25ex\left\id\kern-0.25ex\left\id #1 
		\right\id\kern-0.25ex\right\id\kern-0.25ex\right\id}}
\definecolor{myBlue}{RGB}{30,144,255}
\definecolor{myGreen}{RGB}{69,169,0}
\definecolor{myRed}{RGB}{165,12,42} 
\definecolor{myOrange}{RGB}{225,92,22} 
\definecolor{color2}{RGB}{255, 126, 126}
\definecolor{color3}{RGB}{0, 100, 0}
\definecolor{color1}{RGB}{176, 226, 255}
\tikzset{cross/.style={cross out, draw=black, minimum size=2*(#1-\pgflinewidth), inner sep=0pt, outer sep=0pt},
	cross/.default={0.6ex}}
\newtheorem{theorem}{Theorem}[section]
\newtheorem{lemma}[theorem]{Lemma}
\theoremstyle{definition}
\newtheorem{assumption}[theorem]{Assumption}
\newtheorem{remark}[theorem]{Remark}
\numberwithin{theorem}{section}
\numberwithin{equation}{section}
\numberwithin{table}{section}
\numberwithin{figure}{section}
\newcommand{\low}[2]{\scalebox{#1}{$\SavedStyle#2$}}
\def\calT{\mathcal{T}}
\def\It{\mathcal{J}_\tau} 
\def\V{H^1_0({\Omega})}
\def\Vtr{V_\mathrm{tr}} %
\def\Vte{V_\mathrm{te}} %
\def\L{L^2({\Omega})}
\def\hVht{\hat{V}_{h,\tau}}
\def\VHT{\ThisStyle{{V}_{\low{.6}{H,\T}}}}
\def\hVHT{\ThisStyle{\hat{V}_{\low{.6}{H,\T}}}}
\def\hVht{\hat{V}_{h,\tau}}
\def\Vht{{V}_{h,\tau}}
\newcommand{\hVhtj}{\hat{V}_{h,\tau}^j}
\def\Rht{\mathcal{R}_{h,\tau}}
\def\VH{\ThisStyle{{V}_{\low{.6}{H}}}}
\def\Vh{{V}_h}
\def\Vt{{V}_\tau}
\def\VT{\ThisStyle{{V}_{\low{.6}{\T}}}}
\def\hVT{\ThisStyle{\hat{V}_{\low{.6}{\T}}}}
\def\hVt{\hat{V}_\tau}
\def\Wht{{W}_{h,\tau}}
\def\hWht{\hat{W}_{h,\tau}}
\def\WhtkK{\ThisStyle{{W}_{h,\tau,k}^{\low{.6}{K}}}}
\def\hWhtkK{\ThisStyle{\hat{W}_{h,\tau,k}^{\low{.6}{K}}}}
\def\hVhtkK{\ThisStyle{\hat{V}_{h,\tau,k}^{\low{.6}{K}}}}
\def\hVhtkKj{\ThisStyle{\hat{V}_{h,\tau,k}^{\low{.6}{K},j}}}
\def\VhtkK{\ThisStyle{{V}_{h,\tau,k}^{\low{.6}{K}}}}
\def\VHkK{\ThisStyle{{V}_{\low{.6}{H},k}^{\low{.6}{K}}}}
\def\VhkK{\ThisStyle{{V}_{h,k}^{\low{.6}{K}}}}
\def\N{\mathbb{N}}
\def\R{\mathbb{R}}
\def\IH{\ThisStyle{\mathfrak{I}_{\low{.6}{H}}}}
\def\IT{\ThisStyle{\hat{\mathfrak{I}}_{\low{.6}{\T}}}}
\def\IHk{\ThisStyle{I_{\low{.6}{H}}}} 
\def\IHT{\ThisStyle{\mathfrak{I}_{\low{.6}{H,\T}}}}
\def\hIHT{\ThisStyle{\hat{\mathfrak{I}}_{\low{.6}{H,\T}}}}
\def\QKi{\ThisStyle{\mathcal{Q}^{\low{.6}{K},i}}}
\def\QkKi{\ThisStyle{\mathcal{Q}_k^{\low{.6}{K},i}}}
\def\Qkl{\mathcal{Q}_{k,\ell}}
\def\QklKi{\ThisStyle{\mathcal{Q}_{k,\ell}^{\low{.6}{K},i}}}
\def\DKi{\ThisStyle{D_{\low{.6}{K},i}}}
\def\calQ{\mathcal{Q}}
\def\calB{\mathcal{B}}
\def\calN{\mathcal{N}}
\def\calK{\mathcal{K}}
\def\calC{\mathcal{C}}
\def\frakA{\mathfrak{A}}
\def\frakB{\mathfrak{B}}
\def\frakC{\mathfrak{C}}
\def\frakT{T} %
\def\Mhk{M_h} %
\def\MhkK{\ThisStyle{M_{h,\low{.6}{K}}}}
\def\MHk{\ThisStyle{M_{\low{.6}{H}}}} %
\def\Shki{S_{h}^{i}} %
\def\Shkim{S_{h}^{i-1}} %
\def\ShkiK{\ThisStyle{S_{h,\low{.6}{K}}^{i}}} %
\def\ShkimK{\ThisStyle{S_{h,\low{.6}{K}}^{i-1}}} %
\newcommand{\Nb}{\mathsf{N}}
\def\dt{\,\mathrm{d}t}
\def\ds{\,\mathrm{d}s}
\def\dx{\,\mathrm{d}x}
\def\intT{\int_0^\frakT}
\def\la{\langle}
\def\ra{\rangle}
\def\T{\ThisStyle{\low{.9}{\calT}}}
\def\NT{N_t}%
\def\NH{N_x}%
\def\Nt{n_t}%
\def\Nh{n_x}%
\def\calNH{\ThisStyle{\calN_{\low{.6}{H}}}}
\def\calTH{\ThisStyle{\calK_{\low{.6}{H}}}}
\def\calTh{\calK_h}
\def\piH{\ThisStyle{\pi_{\low{.6}{H}}}}
\def\PiH{\ThisStyle{\Pi_{\low{.6}{H}}}}
\def\tddt{\tfrac{\mathrm{d}}{\mathrm{d}t}}
\def\xf{15}
\def\yf{0}
\def\xs{12}
\def\ys{0}
\def\xt{7}
\def\yt{0}
\newcommand{\vH}{\ThisStyle{v_{\low{.6}{H}}}}
\newcommand{\wH}{\ThisStyle{w_{\low{.6}{H}}}}
\newcommand{\vh}{v_h}
\newcommand{\uh}{u_h}
\newcommand{\vht}{v_{h,\tau}}
\newcommand{\duht}{\dot{u}_{h,\tau}}
\newcommand{\wht}{w_{h,\tau}}
\newcommand{\uht}{u_{h,\tau}}
\newcommand{\tuHT}{\ThisStyle{\tilde{u}_{\low{.6}{H,\T}}}}
\newcommand{\uHT}{\ThisStyle{u_{\low{.6}{H,\T}}}}
\newcommand{\uHTkl}{\ThisStyle{u_{\low{.6}{H,\T},k,\ell}}}
\newcommand{\vHT}{\ThisStyle{v_{\low{.6}{H,\T}}}}
\newcommand{\wHT}{\ThisStyle{w_{\low{.6}{H,\T}}}}
\newcommand{\tuHTkl}{\ThisStyle{\tilde{u}_{\low{.6}{H,\T},k,\ell}}}
\newcommand{\xiK}{\ThisStyle{\xi_{j,k}^{\low{.6}{K},i}}}
\newcommand{\xiKp}{\ThisStyle{\xi_{j-1,k}^{\low{.6}{K},i}}} %
\newcommand{\xiKl}{\ThisStyle{\xi_{i+\ell-1,k}^{\low{.6}{K},i}}} %
\newcommand{\xiKf}{\ThisStyle{\xi^{\low{.6}{K},1}_{1,k}}} %
\newcommand{\lamKkf}{\ThisStyle{\lambda_{1,k}^{\low{0.6}{K},1}}}
\newcommand{\duHT}{\ThisStyle{\dot u_{\low{.6}{H,\T}}}}
\newcommand{\Qu}{\calQ\uHT}
\def\Cint{\ensuremath{C_\mathrm{int}} }
\def\tCint{\ensuremath{\tilde C_\mathrm{int}} }
\def\hCint{\ensuremath{\hat C_\mathrm{int}} }
\def\Cpc{\ensuremath{C_\mathrm{Pc}}}
\def\Cinv{\ensuremath{C_\mathrm{inv}}}
\def\Cst{\ensuremath{C_\mathrm{st}}}
\def\intOm{\int_{{\Omega}}}
\begin{document}
\title[A Space-Time Multiscale Method for Parabolic Problems]{A Space-Time Multiscale Method for Parabolic Problems}
\author[P.~Ljung, R.~Maier, A.~M\aa{}lqvist]{Per~Ljung$^\dagger$, Roland~Maier$^\dagger$, Axel~M\aa{}lqvist$^\dagger$}
\address{${}^{\dagger}$ Department of Mathematical Sciences, Chalmers University of Technology and University of Gothenburg, 412 96 G\"oteborg, Sweden}
\email{perlj@chalmers.se, roland.maier@chalmers.se, axel@chalmers.se}
\date{\today}
\keywords{}
\begin{abstract}
We present a space-time multiscale method for a parabolic model problem with an underlying coefficient that may be highly oscillatory with respect to both the spatial and the temporal variables. The method is based on the framework of the Variational Multiscale Method in the context of a space-time formulation and computes a coarse-scale representation of the differential operator that is enriched by auxiliary space-time corrector functions. Once computed, the coarse-scale representation allows us to efficiently obtain well-approximating discrete solutions for multiple right-hand sides. We prove first-order convergence independently of the oscillation scales in the coefficient and illustrate how the space-time correctors decay exponentially in both space and time, making it possible to localize the corresponding computations. This localization allows us to define a practical and computationally efficient method in terms of complexity and memory, for which we provide a posteriori error estimates and present numerical examples. 
\end{abstract}
\maketitle
{\tiny {\bf Keywords.} multiscale method, numerical homogenization, space-time method, parabolic problem
}\\
\indent
{\tiny {\bf AMS subject classifications.}  
{\bf 65M12}, {\bf 65M60} 
} 

\section{Introduction}\label{s:intro}

In this {paper}, we study numerical solutions to a parabolic equation of the form
\begin{equation*}
\dot{u} - \nabla \cdot (A\nabla u) = f,
\end{equation*}
on a given bounded domain and with a coefficient $A$ that varies rapidly in both space and time. Such an equation arises in several applications, such as heat transfer, transport of groundwater, and the modeling of pressure in compressible flows. 
It is well-known that classical finite element methods only lead to reasonable approximations if the variations of $A$ are resolved by the underlying discretization.  
If the coefficient is highly oscillatory in space and time, these conditions are quite severe and computations quickly become unfeasible. 	

Multiscale methods generally aim to circumvent this problem and construct appropriate coarse-scale spaces with improved approximation properties compared to classical finite element methods by including problem-dependent fine-scale information. 
Some of these approaches emerged from analytical homogenization theory, such as the Heterogeneous Multiscale Method (HMM)~\cite{EE03,AbdEEV12} or the Multiscale Finite Element Method~\cite{HowW97}. The analysis of these methods is based on analytical homogenization and therefore requires assumptions on the coefficient such as scale separation or periodicity. 
To overcome these restrictions {on a theoretical level}, so-called \emph{numerical homogenization} methods were developed. For the elliptic setting, prominent examples include generalized (Multiscale) Finite Element Methods~\cite{BabO83,BabL11,EfeGH13}, the Localized Orthogonal Decomposition method (LOD)~\cite{MalP14,HenP13}, Rough Polyharmonic Splines~\cite{OwhZB14}, or gamblets~\cite{Owh17}. For an overview on numerical homogenization techniques, we also refer to~\cite{OwhS19,MalP20,AltHP21}.
In this contribution, we propose a space-time multiscale method based on the framework of the Variational Multiscale Method~\cite{Hug95,HugS96,HugFMQ98} and the above-mentioned LOD method that is an extension of the Variational Multiscale Method with a particular focus on appropriate localization. Our construction further relies on a space-time Petrov--Galerkin discretization as, e.g., considered in~\cite{UrbP14,Ste15} and treats highly varying coefficients in both space and time.

Note that the LOD has previously been employed for various time-dependent problems, including parabolic-type problems \cite{MalP17, MalP18, AltCMPP18} and wave-type equations \cite{AbdH17,PetS17,MaiP19,LjuMP20,GeeM21}.
However, all these works treat underlying coefficients that only depend on spatial variables and are independent of time, such that the corresponding LOD methods fail to take rapid temporal variations into account. 

In the context of space- and time-varying coefficients for parabolic problems, the approach in~\cite{OwhZ0708} deals with multiscale effects by an appropriate coordinate transformation. This however, requires the solution of global fine-scale problems. More recently, locally defined space-time multiscale methods for parabolic equations have been considered, for instance, in~\cite{ChuELY18,SchS20}, and in~\cite{HuLCEP21} in connection with moving channels. 

The main goal of this paper is to start from the basic ideas of the LOD and introduce and analyze a multiscale method that deals with arbitrary space- and time-dependent coefficients and still allows for the favorable localization properties of the LOD.
To this end, we construct a Petrov--Galerkin method that is based on a decomposition of the solution space into a coarse approximation space and the remainder space. The general idea is to enrich the coarse space by appropriate functions from the remainder space. This is done by the solution of so-called corrector problems that are themselves parabolic-type problems. They are totally independent of each other in both spatial and temporal sense and therefore allow for parallel computations. 
The approach results in a coarse-scale representation of the differential operator with the great advantage that it can be efficiently applied to multiple source functions on the right-hand side. 
We also present and analyze a localized version of the method and illustrate its practical feasibility. 

The remaining parts of the paper are structured as follows. In Section~\ref{s:modelproblem}, we introduce the weak formulation and a classical space-time finite element discretization for a parabolic model problem, along with necessary assumptions. Section~\ref{s:msmethod} is devoted to the construction of the space-time multiscale method and the error analysis for the ideal method. We then motivate and introduce a localized variant of the method and prove a posteriori localization estimates (Section~\ref{s:locmethod}). Finally, we discuss details regarding implementation (Section~\ref{s:implementation}) and provide numerical examples in Section~\ref{s:numericalexamples} before we conclude in Section~\ref{s:conclusion}.

\subsection*{Notation} In the following, $C$ denotes a generic constant that is independent of the mesh sizes $h,H$ and the time steps $\tau,\T$. $C$ might change from line to line in the estimates. Further, we write $a \lesssim b$ if there exists a constant $C$ such that $a \leq C\,b$.

\section{Model Problem and Discretization}
\label{s:modelproblem}

We consider a parabolic equation of the form
\begin{subequations}\label{eq:modelprob}
\begin{alignat}{2}
\dot{u} - \nabla \cdot (A\nabla u) &= f, \quad &&\text{in ${\Omega} \times (0,\frakT]$}, \label{eq:para}\\
u&= 0, \quad &&\text{on $\Gamma \times (0,\frakT]$}, \label{eq:dbc}\\
u(0) &= 0, \quad &&\text{in ${\Omega}$} \label{eq:init1}
\end{alignat}
\end{subequations}
on a polygonal (or polyhedral) domain ${\Omega} \subset\mathbb{R}^d, \ d=2,3,$ with boundary $\Gamma := \partial {\Omega}$. Further, $\frakT > 0$ denotes the final time. The coefficient $A$ describes a (possibly highly oscillatory) diffusion and $f \in L^2(H^{-1})$ denotes the source function of the system. We have in mind a coefficient $A\colon {\Omega}\times(0,\frakT) \to \mathbb{R}^{d\times d}$ that depends on both the spatial and temporal variables. We emphasize that the choice of zero initial data is to simplify the presentation and nonzero initial data can be considered as well with just a few alterations throughout this work. 

\subsection{Weak formulation}
Let $H^1_0({\Omega})$ be the classical Sobolev space with zero trace on $\Gamma$ and the norm 
\begin{equation*}
\|v\|_{\V} =  \|\nabla v\|_{L^2({\Omega})},\quad v \in \V.
\end{equation*}
Its dual space is denoted with $H^{-1}({\Omega})$.  Moreover, let $L^2(0,\frakT; \mathcal{B})$ 
be the standard Bochner space  
with norm 
\begin{align*}
\|v\|_{L^2(0,\frakT; \mathcal{B})} &= \bigg( \int_0^\frakT \|v\|_{\mathcal{B}}^2\dt \bigg)^{1/2},
\end{align*}
where $\calB$ is a Banach space with norm $\|\cdot\|_\calB$. Throughout this paper, we frequently abbreviate occurring Bochner spaces without the interval and the domain and write, e.g., $L^2(H^1_0) := L^2(0,\frakT;H^1_0({\Omega}))$. Further, the following assumptions are made on the data.

\begin{assumption}[Regularity]\label{ass:reg}
The diffusion coefficient $A\in L^\infty({\Omega}\times (0,\frakT) ; \mathbb{R}^{d\times d}_\mathrm{{sym}})$ satisfies
\begin{align*}
0 < \alpha := \essinf_{(x,t)\in {\Omega}\times(0,\frakT)} \inf_{v\in \mathbb{R}^d\backslash \{0\}} \frac{A(x,t)v\cdot v}{v\cdot v} \leq \esssup_{(x,t)\in {\Omega}\times(0,\frakT)} \sup_{v\in \mathbb{R}^d\backslash \{0\}} \frac{A(x,t)v\cdot v}{v\cdot v} =: \beta < \infty,
\end{align*}
and we set $\bar{\beta} := \|A\|_{L^\infty(\Omega\times (0,T))}$. 
\end{assumption}

In order to discretize~\eqref{eq:modelprob}, we first express the model in weak form and consider the following space-time formulation: find $u \in L^2(H^1_0)\cap H^1(H^{-1})$ with $u(0) = 0$ such that
\begin{equation}\label{eq:exsol}
\intT \la\dot u, v\ra + a(t;u,v) \dt = \intT \la f,v\ra \dt
\end{equation}
for all $v \in L^2(H^1_0)$. Hereby, $\la\cdot,\cdot\ra$ denotes the duality pairing of $H^{-1}({\Omega})$ and $H^1_0({\Omega})$, which reduces to the $L^2$-inner product if the function in the first component additionally is in $L^2({\Omega})$. Further, $a(t;\cdot,\cdot)\colon\V \times \V \to \R$ denotes the bilinear form defined by
\begin{equation*}
a(t;v,w) := \intOm A(\cdot,t) \nabla v \cdot \nabla w \dx
\end{equation*}
for almost all $t \in (0,\frakT)$. In the following, we will drop the explicit $t$-dependence in the bilinear form $a(t;\cdot,\cdot)$ and simply write $a(\cdot,\cdot)$ when $a$ is considered in connection with an integral in time.
If Assumption~\ref{ass:reg} holds, problem \eqref{eq:exsol} has a unique solution, see, e.g.,~\cite{Ste15}.

Next, we introduce a discretized setting based on the weak form~\eqref{eq:exsol} by means of a classical finite element method. Therefore, we introduce finite element spaces for the spatial and temporal domains separately and then combine them to particular space-time finite element spaces. We then define the corresponding Petrov--Galerkin discretization and finish this section by discussing problems that occur with the standard approach in the setting where the oscillations in the coefficient are not resolved.

\subsection{A classical space-time finite element method}
\label{ss:FE}

Let $\{\calTh\}_{h>0}$ be a family of regular partitions (or meshes) of the domain ${\Omega}$ into simplices in the sense of~\cite[Ch.~2~\&~3]{Cia78}. Further, we assume shape regularity and quasi-uniformity of the meshes. For an element $K\in \calTh$, we let $h_K := \text{diam}(K)$ and further define the mesh size as the largest diameter of the partition, i.e., $h:=\max_{K\in \calTh} h_K$. We construct a classical finite element space using continuous piecewise affine functions by defining
\begin{align*}
\Vh := \{v \in \V \,\colon\, v\id_{K},\,K\in\calTh,\text{ is a polynomial of degree }\leq 1\},
\end{align*}
where $\id_S$ is the indicator function of some set $S$. 
Let $\mathcal{N}_h$ be the set of interior nodes of $V_h$ and denote with $\{\varphi_x\}_{x\in\mathcal{N}_h}$ the set of nodal basis functions corresponding to the mesh $\calTh$. 

For the temporal discretization, let $\tau > 0$ be some time step and set $t_i = i \tau$, $i = 0,\ldots,\Nt$. For simplicity, we assume that $t_{\Nt} = \frakT$. We denote with $\It$ the partition of the time interval $[0,\frakT]$ into sub-intervals $[t_{i-1},t_i]$, $i = 1,\ldots,\Nt$. Based on this partition, we introduce two discrete spaces $\hVt$ and $\Vt$ for the temporal discretization of the trial and test space, respectively. Precisely, we have that 
\begin{equation*}
\hVt := \{v \in H^1(0,\frakT)\,\colon\, v\id_{I},\,I \in \It,\text{ is a polynomial of degree }\leq 1 \text{ and } v(0) = 0 \}
\end{equation*}
with nodal basis $\{\zeta_i\}_{i=1}^{\Nt}$ with respect to $\It$, and
\begin{equation*}
\Vt := \{v \in L^2(0,\frakT)\,\colon\, v\id_{I},\,I \in \It,\text{ is constant} \}
\end{equation*}
with piecewise constant local basis functions $\{\chi_i\}_{i=1}^{\Nt}$, where $\chi_i = \id_{[t_{i-1},t_{i}]}$. 

At this point, we restrict the possible choices of coefficients $A$. Note that the following assumption is not severe and aims at avoiding technicalities in the later parts of this paper.
\begin{assumption}[Structure of $A$]\label{ass:strucA}
We assume that for almost all $x \in {\Omega}$, $A(x,\cdot)$ is piecewise constant on the (fine) intervals $I \in \It$.
\end{assumption}

Based on the above definitions of spatial and temporal finite element spaces, we now introduce the corresponding tensor-product space-time finite element spaces with respect to the domain ${\Omega} \times (0,\frakT)$. We set
\begin{equation}\label{eq:stspaces}
\hVht : = \Vh \times \hVt, \qquad \Vht := \Vh \times \Vt
\end{equation}
for the trial and test space, respectively. Note that the basis functions that span the trial space $\hVht$ are $\{\varphi_x\zeta_i\}_{x\,\in\,\mathcal{N}_h,\,i=1,\ldots,\Nt}$, so-called \emph{pyramids} in space-time, while for the test space we have the basis $\{\varphi_x\chi_i\}_{x\,\in\,\mathcal{N}_h,\, i=1,\ldots,\Nt}$, so-called \emph{tents} in space-time. 

With the discrete space-time finite element spaces~\eqref{eq:stspaces}, we now seek a discrete solution $u_{h,\tau} \in \hVht$ such that
\begin{equation}\label{eq:discretizedsol}
\intT \la\duht, \vht\ra + a(\uht,\vht) \dt = \intT \la f,\vht\ra \dt
\end{equation}
for all $\vht \in \Vht$. Note that, with the explicit choices of the temporal discretization, we can reformulate~\eqref{eq:discretizedsol} in terms of a classical time-stepping scheme. To this end, we decompose the trial function as a sum of its temporal basis functions, i.e.,
\begin{align*}
\uht = \sum_{j=1}^{\Nt} u_h^j\, \zeta_j
\end{align*}
with $u_h^j \in \Vh$ and use test functions of the form $\vht = \vh \chi_i$ with $\vh \in \Vh$. Inserting these expressions into \eqref{eq:discretizedsol}, we obtain
\begin{align}
\int_{t_{i-1}}^{t_i} \la \uh^{i-1}\dot{\zeta}_{i-1},\vh \ra + \la \uh^i\,\dot{\zeta}_{i}, \vh\ra + a(\uh^{i-1}\,\zeta_{i-1},\vh) + a(\uh^i\,\zeta_i, \vh) \dt = \int_{t_{i-1}}^{t_i} \la f,\vh\ra \dt 
\end{align}
for all $\vh \in \Vh$ and for $i=1,2,\ldots,\Nt$, where $\zeta_{i-1}$ and $\zeta_i$ remain due to their support on $[t_{i-1}, t_i]$. By further evaluating the integrals, we obtain the well-known Crank--Nicolson scheme: for $i=1,2,\ldots,\Nt$, find $\uh^i\in \Vh$ such that
\begin{equation*}\label{eq:CNscheme}
\la \uh^i, \vh\ra + \frac{\tau}{2}\,a(t_{i-1/2};\uh^i,\vh) = \int_{t_{i-1}}^{t_i} \la f,\vh\ra \dt + \la\uh^{i-1},\vh\ra - \frac{\tau}{2}\,a(t_{i-1/2};\uh^{i-1},\vh)
\end{equation*}
for all $\vh \in \Vh$, where we set $t_{i-1/2} := (i-1/2)\tau$ and explicitly use Assumption~\ref{ass:strucA}.

It is well-known that the Crank--Nicolson scheme is unconditionally stable, see, e.g., \cite[Ch.~7]{KnaA03}. Nevertheless, within this work, we rely on the space-time formulation~\eqref{eq:discretizedsol} and aim for a stability estimate with respect to the spaces $\Vtr:=L^2(H^1_0)\cap H^1(H^{-1})$ and $\Vte:=L^2(H^1_0)$. 
The well-posedness directly follows from the inf-sup estimate for the bilinear form $\frakA\colon\Vtr\times\Vte\to\R$ defined by
\begin{equation}\label{eq:defA}
\frakA(v,w) := \intT \la\dot v, w\ra + a(v,w) \dt
\end{equation}
that is stated in the following lemma. First, however, we define appropriate norms on $\Vtr$ and $\Vte$, respectively, that read 
\begin{equation*}
\|v\|^2_{\Vtr} := \intT  \|\dot v(t,\cdot)\|^2_{H^{-1}({\Omega})} + \|\nabla \overline{v}(t,\cdot)\|_{\L}^2 \dt,\qquad
\|v\|^2_{\Vte} := \intT \|\nabla v(t,\cdot)\|_{\L}^2 \dt,
\end{equation*}
where $\overline{v}:=\sum_{i = 1}^{\Nt} \big(\tau^{-1}\int_{t_{i-1}}^{t_i}v(s,\cdot) \ds\big)\,\chi_i$ is the mean with respect to the temporal discretization. 
We emphasize that these are norms due to the Friedrichs inequality. In the following, the mean $\overline{(\cdot)}$ is always taken with respect to the (fine) time step $\tau$.

\begin{lemma}[Inf-sup condition]\label{lem:infsup}
The bilinear form $\frakA$ fulfills 
\begin{equation*}
\adjustlimits\inf_{\vht \in \hVht}\sup_{\wht \in \Vht} \frac{\frakA(\vht,\wht)}{\|\vht\|_{\Vtr}\,\|\wht\|_{\Vte}} \geq c_\frakA 
\end{equation*}
with $c_\frakA = \min\{\alpha,\sqrt{\alpha/\beta}\}$, where here and in the following we implicitly exclude zero in the infimum and supremum. 
\end{lemma}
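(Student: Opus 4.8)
The plan is to establish the inf-sup condition by the standard technique for parabolic space-time formulations: given an arbitrary trial function $\vht \in \hVht$, I construct an explicit test function $\wht \in \Vht$ for which the ratio $\frakA(\vht,\wht)/(\|\vht\|_{\Vtr}\|\wht\|_{\Vte})$ is bounded below by $c_\frakA$. The natural candidate is a combination of two pieces: the trial function itself (which controls the symmetric spatial part $a(\vht,\vht)$ via coercivity) and a Riesz-type representative of $\dot\vht$ in $H^{-1}$ (which controls the time-derivative part $\intT\la\dot\vht,\wht\ra$). Because the test space $\Vt$ consists of piecewise constants while $\vht$ is piecewise affine in time, I must be careful to replace $\vht$ by its temporal mean $\overline{\vht}$ wherever a test function is required, which is exactly why the norm on $\Vtr$ is phrased in terms of $\overline{\vht}$.

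First I would analyze the two terms separately. For the symmetric part, testing with (the temporal mean of) $\vht$ gives $\intT a(\vht,\overline{\vht})\dt$; using Assumption~\ref{ass:strucA} so that $A$ is constant on each time interval, together with the fact that the mean commutes appropriately on each subinterval, this reduces to $\intT a(\overline{\vht},\overline{\vht})\dt \geq \alpha\,\|\overline{\vht}\|^2_{\Vte}$ by coercivity. For the time-derivative part, I would introduce, on each time interval, the spatial Riesz representative $z_h \in \Vh$ of $\dot\vht$ with respect to the $H^{-1}$-inner product, so that $\la \dot\vht, z_h\ra = \|\dot\vht\|^2_{H^{-1}}$ and $\|z_h\|_{H^1_0} = \|\dot\vht\|_{H^{-1}}$. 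Testing against a piecewise-constant-in-time version of $z_h$ recovers $\intT\|\dot\vht\|^2_{H^{-1}}\dt$, up to a cross term $\intT a(\vht, z_h)\dt$ which I bound using the upper bound $\beta$ and Young's inequality.

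The key step is then to combine these into $\wht := \overline{\vht} + \delta\,\tilde z_h$ for a suitable weight $\delta > 0$ (where $\tilde z_h$ denotes the piecewise-constant-in-time Riesz representative), expand $\frakA(\vht,\wht)$, and show that the positive contributions $\alpha\|\overline{\vht}\|^2_{\Vte} + \delta\intT\|\dot\vht\|^2_{H^{-1}}\dt$ dominate the indefinite cross terms. Choosing $\delta$ to balance the $\beta$-dependent cross term against the coercivity reserve is what produces the explicit constant $c_\frakA = \min\{\alpha,\sqrt{\alpha/\beta}\}$; the scaling $\sqrt{\alpha/\beta}$ is the tell-tale sign that $\delta$ is picked proportional to $\sqrt{\alpha/\beta}$. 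Finally I would bound $\|\wht\|_{\Vte} \lesssim \|\vht\|_{\Vtr}$ so that the denominator is controlled, and divide.

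\textbf{The main obstacle} I anticipate is the interplay between the piecewise-affine trial functions in time and the piecewise-constant test functions in time: the quantity $\frakA(\vht,\wht)$ must be evaluated with $\wht$ constant on each interval, so every term involving $\vht$ effectively sees only its temporal mean $\overline{\vht}$, and I must verify that $a(\vht,\overline{\vht})$ genuinely collapses to $a(\overline{\vht},\overline{\vht})$ — this is precisely where Assumption~\ref{ass:strucA} (piecewise-constant $A$ in time) is essential, since otherwise $A$ would not commute with the temporal averaging and the coercivity lower bound would degrade. The second delicate point is keeping the final test-function norm $\|\wht\|_{\Vte}$ estimable by $\|\vht\|_{\Vtr}$ rather than by $\|\vht\|_{\Vte}$, which again forces the use of the mean in the definition of the $\Vtr$-norm and requires a stability estimate for the temporal averaging operator in the $H^1_0$-seminorm.
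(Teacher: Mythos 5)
The paper's own ``proof'' of this lemma is a one-line citation: it invokes \cite[Prop.~2.9]{UrbP14}, which yields the inf-sup constant $1$ when all norms are weighted by the coefficient (energy norms $\|A^{1/2}\nabla\cdot\|_{L^2(\Omega)}$ and the corresponding dual norms), and then rescales: replacing the weighted norms by the unweighted ones costs a factor $\sqrt{\alpha}$ on the test side and $\sqrt{\min\{\alpha,\beta^{-1}\}}$ on the trial side, whose product is exactly $c_\frakA=\min\{\alpha,\sqrt{\alpha/\beta}\}$. Your plan --- an explicit test function $\wht=\overline{\vht}+\delta\,\tilde z$ built from the temporal mean and a Riesz representative of $\dot{\vht}$ --- is the right kind of argument and is indeed what underlies the cited proposition, and several of your structural points are correct: by Assumption~\ref{ass:strucA} one has $\intT a(\vht,\overline{\vht})\dt=\intT a(\overline{\vht},\overline{\vht})\dt\geq\alpha\,\|\overline{\vht}\|^2_{\Vte}$, and $\intT\la\dot{\vht},\overline{\vht}\ra\dt=\tfrac12\|\vht(\frakT)\|^2_{L^2(\Omega)}\geq 0$ telescopes because $\vht(0)=0$.

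The genuine gap lies in your choice of Riesz representative, with two consequences. First, there is no $z_h\in\Vh$ with $\la\dot{\vht},z_h\ra=\|\dot{\vht}\|^2_{H^{-1}(\Omega)}$ and $\|\nabla z_h\|_{L^2(\Omega)}=\|\dot{\vht}\|_{H^{-1}(\Omega)}$: the representative you can actually compute in $\Vh$ (a discrete Laplace solve) only produces the \emph{discrete} dual norm $\sup_{w\in\Vh}\la\dot{\vht},w\ra/\|\nabla w\|_{L^2(\Omega)}$, and upgrading to the continuous $H^{-1}$-norm appearing in $\|\cdot\|_{\Vtr}$ costs the $H^1$-stability constant of the $L^2$-projection onto $\Vh$ (a point the paper itself glosses over). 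Second, and more seriously, with the Laplacian-based representative the cross term admits only the bound $\big|\intT a(\vht,\tilde z)\dt\big|\leq\beta\,\|\overline{\vht}\|_{\Vte}\,\|\tilde z\|_{\Vte}$; writing $B=\|\overline{\vht}\|_{\Vte}$ and $D=\|\tilde z\|_{\Vte}$, your Young-inequality step requires the form $\delta D^2-\delta\beta BD+\alpha B^2$ to be nonnegative, which forces $\delta\leq 4\alpha/\beta^2$ and caps the achievable inf-sup constant at order $\alpha/\beta^2$ --- strictly smaller than $\min\{\alpha,\sqrt{\alpha/\beta}\}$ whenever $\beta>1$. In particular, your proposed scaling $\delta\propto\sqrt{\alpha/\beta}$ makes that form indefinite as soon as $\beta^3>16\alpha$ (e.g., whenever $\beta>4$), and the argument collapses. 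The missing idea is to take the representative with respect to the $A$-weighted inner product: with $a_i(\cdot,\cdot):=a(t_{i-1/2};\cdot,\cdot)$ on $I_i=[t_{i-1},t_i]$ (constant in time by Assumption~\ref{ass:strucA}), define $z_i\in\Vh$ by $a_i(z_i,w)=\la\dot{\vht}|_{I_i},w\ra$ for all $w\in\Vh$. Then, by symmetry of $A$, the cross term is not estimated but computed, $a_i(\overline{\vht},z_i)=\la\dot{\vht},\overline{\vht}\ra$, and it telescopes together with the first term; no Young's inequality and no balancing parameter are needed (take $\delta=1$). One obtains the identity $\frakA(\vht,\overline{\vht}+z)=\sum_i\tau\big(a_i(z_i,z_i)+a_i(\overline{\vht},\overline{\vht})\big)+\|\vht(\frakT)\|^2_{L^2(\Omega)}$, which equals the $A$-weighted test norm of $\overline{\vht}+z$ squared, hence inf-sup $\geq 1$ in energy norms; the rescaling described above then delivers precisely $c_\frakA$. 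That is exactly the content of the proposition the paper cites.
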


\begin{proof}
The lemma follows directly from~\cite[Prop.~2.9]{UrbP14} when the $H^1_0$-norm $\|\nabla\cdot\|_{L^2(\Omega)}$ is used instead of $\|A^{1/2}\nabla \cdot\|_{L^2(\Omega)}$ and the last term in the definition of the norm in $\Vtr$ is omitted. 
\end{proof}
From Lemma~\ref{lem:infsup}, we directly obtain the well-posedness of~\eqref{eq:discretizedsol} with stability estimate
\begin{equation*}
\|\uht\|_{\Vtr} \leq c_\frakA^{-1}\,\|f\|_{L^2(H^{-1})}, 
\end{equation*}
see, e.g., \cite{Bab71}. 
If the solution $u$ of~\eqref{eq:exsol} is sufficiently regular, one can, for instance, show an error estimate of the form
\begin{equation}\label{eq:erruht}
\|u - \uht\|_{\Vte} \leq C\,(h^{s}+\tau^s)\big(\|u\|_{L^2(0,\frakT;H^{1+s}({\Omega}))} + \|u\|_{H^{1+s}(0,\frakT;L^2({\Omega}))}\big)
\end{equation}
with $s \in (0,1]$, see, e.g.,~\cite{Hac81,Zan20}. 
However, in the presence of an oscillatory coefficient $A$ on a scale $\varepsilon$, the right-hand side of~\eqref{eq:erruht} behaves like $((h+\tau)/\varepsilon)^s$ due to the scaling in the norms. That is, the error estimate is only reliable in the regime $h + \tau < \varepsilon$ and otherwise leads to unsatisfactory results. 

Therefore, the space $\hVht$ is not suited for the approximation of~\eqref{eq:exsol} in an under-resolved setting with $h + \tau \geq \varepsilon$, which is a well-known issue with varying coefficients. A~possible solution for this difficulty is provided in the following section. 
	
\section{A Space-Time Multiscale Method}
\label{s:msmethod}

The goal of this section is to establish a multiscale method that efficiently solves~\eqref{eq:exsol} independently of rapid space-time variations in the diffusion coefficient. To this end, we first introduce some notation for the discretization. Let $\VH$ be a finite element space defined analogously to $V_h$ in Section \ref{ss:FE}, but with larger mesh size $H>h$. Moreover, we assume that the mesh $\calTh$ is a refinement of $\calTH$ such that $\VH \subset V_h$. The set of interior nodes of $\VH$ is given by $\calNH$. We will commonly refer to $\VH$ as the coarse-scale space and to $V_h$ as the fine-scale space. 

For the temporal discretization, let $\T>\tau>0$ be some coarse time step and set $T_i = i\,\T$, $i=0,1,\ldots,\NT$, where for simplicity we again assume that $T_{\NT} = \frakT$. Similar to Section~\ref{ss:FE}, we let $\mathcal{J}_{\scalebox{.6}{$\T$}}$ denote the coarse partition of the time domain $[0,\frakT]$ into sub-intervals $[T_{i-1},T_i]$, $i=1,2,\ldots,\NT$. Given this partition, we define the coarse trial and test spaces $\hVT$ and $\VT$ in complete analogy with $\hVt$ and $\Vt$, based on piecewise linear and piecewise constant basis functions, respectively. 
As for the spatial discretization, we assume that the coarse and fine time steps $\T$ and $\tau$ are such that $\T/\tau \in \N$. 
The coarse tensor-product space-time finite element trial and test space with respect to ${\Omega} \times (0,\frakT)$ on the coarse scale are given by
\begin{align*}
\hVHT := \VH \times \hVT, \qquad \VHT := \VH \times \VT.
\end{align*}
Let us mention that with regard to possible fine oscillations in the diffusion coefficient $A$, we assume that $h$ and $\tau$ (as well as the respective spaces) are fine enough to resolve these fine quantities, while $H$ and $\T$ are parameters on an under-resolved coarse scale.

We emphasize that we reuse the notation from Section~\ref{s:modelproblem} in the following and write $\{\varphi_x\zeta_i\}_{x\,\in\,\mathcal{N}_H,\, i=1,\ldots,\NT}$ (pyramids) and $\{\varphi_x\chi_i\}_{x\,\in\,\mathcal{N}_H,\, i=1,\ldots,\NT }$ (tents) for the canonical basis functions of the (coarse) spaces $\hVHT$ and $\VHT$, respectively.

In the following subsections, we derive a method based on a clever decomposition of the fine spaces $\hVht$ and $\Vht$ each into a coarse and a fine subspace. These decompositions allow us to construct an alternative and more sophisticated decomposition for the trial space, which incorporates suitable fine quantities into the coarse space.

\subsection{Quasi-interpolation and space decompositions}

A straight-forward decomposition consists in separating the trial space $\hVht$ into the coarse space $\hVHT$ and a fine-scale remainder space. Therefore, let $\IH\colon \L \rightarrow \VH$ be a \emph{projective quasi-interpolation operator} that maps into the finite element space $\VH$. That is, $\IH$ fulfills
\begin{subequations}\label{eq:IHloc}
\begin{alignat}{2}
H^{-1}\,\|(\Id - \IH)v\|_{L^2(K)} + \|\nabla \IH v \|_{L^2(K)} &\leq \tCint\, \|\nabla v\|_{L^2(\Nb(K))}, \quad &&v \in \V,\label{eq:IH1loc}\\ 
\|\IH v\|_{L^2(K)} &\leq \tCint\, \|v\|_{L^2(\Nb(K))}, \quad &&v \in \L, \label{eq:IH2loc}\\
\IH \circ \IH &= \IH, \label{eq:IH3}
\end{alignat}
\end{subequations}
where $\Nb(K)$ is the neighborhood of the element $K \in \calTH$. To be more precise we have, for any subset $S \subseteq {\Omega}$ (e.g., an element or a node),
\begin{equation}
\Nb(S) := \bigcup\bigl\{\overline{K}\in\calTH\,\colon\,\overline{K}\cap\overline{S} \neq\emptyset\bigr\}.
\label{eq:NT}
\end{equation}
With \eqref{eq:IHloc}, we directly obtain a global estimate of the form
\begin{subequations}\label{eq:IH}
\begin{alignat}{2}
H^{-1}\,\|(\Id - \IH)v\|_{\L} + \|\nabla \IH v \|_{\L} &\leq \Cint\, \|\nabla v\|_{\L}, \quad &&v \in \V,\label{eq:IH1}\\ 
\|\IH v\|_{\L} &\leq \Cint\, \|v\|_{\L}, \quad &&v \in \L, \label{eq:IH2}
\end{alignat}
\end{subequations}
where the constant \Cint differs from \tCint by a moderate multiplicative factor. 
Although an explicit characterization of the operator $\IH$ is not essential for the definition of the method below, we restrict ourselves to the particular choice $\IH := \piH \circ \PiH$, where $\PiH$ denotes the piecewise $L^2$-projection onto $P_1(\calTH)$, the space of piecewise affine functions. The operator $\piH$, for any $\vH\in P_1(\calTH)$ and any vertex $z$ of $\calTH$, performs an averaging in the sense that
\begin{equation*}
\big(\piH(\vH)\big)(z) := \sum_{K\in\calTH:\atop z \in \overline{K}} \big({\vH\id}_K\big)(z)\cdot\frac{1}{\mathrm{card}\{K' \in \calTH\,\colon\, z \in \overline{K'}\}},
\end{equation*}
i.e., it takes in each node the average of the values in the adjacent elements. 
We refer to~\cite{Osw93,Bre94,ErnG17} for a proof of \eqref{eq:IH1loc}--\eqref{eq:IH3} for this particular choice.

Next, we combine this spatial operator with pointwise interpolation in time for the trial space and with interval-wise averaging for the test space. This leads to the definition of the following fine-scale remainder spaces,
\begin{align*}
\hWht &:= \Big\{w \in \hVht\,\colon\, \IH w(\cdot,T_i) = 0 \text{ for all } i = 1,\ldots,\NT\Big\},\\
\Wht &:= \Big\{w \in \Vht\,\colon\, \T^{-1}\int_{T_{i-1}}^{T_i}\IH w \dt = 0 \text{ for all } i = 1,\ldots,\NT\Big\}.
\end{align*}
Note that, by construction, $\hVht= \hVHT \oplus \hWht$ and $\Vht= \VHT \oplus \Wht$. As already discussed in Section~\ref{ss:FE}, the space $\hVHT$ lacks suitable approximation properties if the coarse discretization parameters do not resolve variations in the coefficient. Therefore, we modify the decomposition $\hVht= \hVHT \oplus \hWht$ and add certain fine information to the space $\hVHT$ without changing the overall number of degrees of freedom. A corresponding (ideal) multiscale method is introduced in the next subsection. First, however, we require interpolation operators on the coarse spaces. We define $\hIHT\colon\hVht \to \hVHT$ and $\IHT\colon\Vht \to \VHT$ by
\begin{align*}
\hIHT v := \IT\IH v = \sum_{i = 1}^{\NT} \IH v(\cdot,t_i)\, \zeta_i, \qquad 
\IHT v := \sum_{i = 1}^{\NT} \Big(\T^{-1}\int_{T_{i-1}}^{T_i} \IH v \dt\Big)\, \chi_i,
\end{align*}
where $\IT$ is the nodal interpolation with respect to $\mathcal{J}_{\scalebox{.6}{$\T$}}$.
Note that these operators are constructed in a way such that they do not spread information in time. In spatial sense, however, the nature of $\IH$ may increase the support of a given function by at most one layer of elements. 
We conclude this subsection by presenting useful stability estimates for the operators $\hIHT$ and $\IHT$.
\begin{lemma}[Stable interpolation]\label{lem:boundInterpolation}
It holds that
\begin{equation}\label{eq:stabIHT}
\|\IHT v\|_{\Vte} \leq \Cint\,\|v\|_{\Vte}
\end{equation}
for all $v \in \Vte$ with $\Cint$ from~\eqref{eq:IH}. 
If $\T \leq C H$, there exists a constant $\hCint$ such that
\begin{equation}\label{eq:stabhIHT}
\|\hIHT v\|_{\Vtr} \leq \hCint H^{-1}\,\|v\|_{\Vtr}
\end{equation}
for $v \in \Vtr$.
\end{lemma}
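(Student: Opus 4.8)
The plan is to prove the two bounds separately, since only the second carries the $H^{-1}$ factor and the hypothesis $\T\le CH$. For \eqref{eq:stabIHT}, observe that $\IHT v$ is piecewise constant in time on the coarse grid with value $\T^{-1}\int_{T_{i-1}}^{T_i}\IH v\dt\in\VH$ on $[T_{i-1},T_i]$. Evaluating the $\Vte$-norm interval by interval, I would pull $\nabla$ inside the time average, apply the Cauchy--Schwarz (Jensen) inequality in time to get $\|\nabla\T^{-1}\int_{T_{i-1}}^{T_i}\IH v\dt\|_{\L}^2\le\T^{-1}\int_{T_{i-1}}^{T_i}\|\nabla\IH v\|_{\L}^2\dt$, and then invoke the pointwise-in-time spatial stability \eqref{eq:IH1}, i.e. $\|\nabla\IH v(\cdot,t)\|_{\L}\le\Cint\|\nabla v(\cdot,t)\|_{\L}$ for a.e. $t$. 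Multiplying by $\T$ and summing over the coarse intervals collapses the time integral and yields \eqref{eq:stabIHT} with constant $\Cint$; this part needs neither an inverse estimate nor $\T\le CH$.

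For \eqref{eq:stabhIHT} I would split the squared $\Vtr$-norm of $w:=\hIHT v$ into the time-derivative part $\intT\|\dot w\|_{H^{-1}(\Omega)}^2\dt$ and the mean part $\intT\|\nabla\overline w\|_{\L}^2\dt$, and treat each on the coarse grid. Since $w$ is affine in time with nodal values $w_i:=\IH v(\cdot,T_i)\in\VH$, on each coarse interval $\dot w=\T^{-1}(w_i-w_{i-1})=\T^{-1}\IH\delta_i$ with $\delta_i:=v(\cdot,T_i)-v(\cdot,T_{i-1})=\int_{T_{i-1}}^{T_i}\dot v\dt$. The crucial ingredient is an inverse-type estimate for $\IH$,
\[
\|\IH g\|_{\L}\le C\,H^{-1}\,\|g\|_{H^{-1}(\Omega)},\qquad g\in\L,
\]
obtained by combining the $\L$-stability \eqref{eq:IH2} with the inverse inequality $\|\nabla q\|_{\L}\le\Cinv H^{-1}\|q\|_{\L}$ on $\VH$ through a duality argument (using that $\IH$ is a projection onto $\VH$). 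Together with Friedrichs' inequality $\|\IH g\|_{H^{-1}(\Omega)}\lesssim\|\IH g\|_{\L}$ and $\|\delta_i\|_{H^{-1}(\Omega)}\le\T^{1/2}\big(\int_{T_{i-1}}^{T_i}\|\dot v\|_{H^{-1}(\Omega)}^2\dt\big)^{1/2}$, this turns $\sum_i\T^{-1}\|\IH\delta_i\|_{H^{-1}(\Omega)}^2$ into $\lesssim H^{-2}\intT\|\dot v\|_{H^{-1}(\Omega)}^2\dt\le H^{-2}\|v\|_{\Vtr}^2$.

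For the mean part, Jensen in time gives $\intT\|\nabla\overline w\|_{\L}^2\dt\le\intT\|\nabla w\|_{\L}^2\dt$, and since $w$ is affine in time this is $\lesssim\T\sum_{i=0}^{\NT}\|\nabla\IH v(\cdot,T_i)\|_{\L}^2$. Applying the inverse inequality on $\VH$ and then \eqref{eq:IH2} bounds each summand by $\Cinv^2\Cint^2H^{-2}\|v(\cdot,T_i)\|_{\L}^2$, so it remains to control the point-in-time values $\|v(\cdot,T_i)\|_{\L}$. Here I would use the energy identity $\|v(\cdot,T_i)\|_{\L}^2=2\int_0^{T_i}\la\dot v,v\ra\dt$ (recall $v(0)=0$) together with the cancellation $\int_{t_{j-1}}^{t_j}\la\dot v,v-\overline v\ra\dt=0$ on each fine interval, valid because $v\in\hVht$ is affine in time there (so $\dot v$ is constant and $v-\overline v$ has vanishing mean); this replaces $v$ by $\overline v$ and gives $\|v(\cdot,T_i)\|_{\L}^2\le\|\dot v\|_{L^2(H^{-1})}^2+\|\nabla\overline v\|_{\LL}^2=\|v\|_{\Vtr}^2$ uniformly in $i$. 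Summing over the $\NT+1\le1+\frakT/\T$ nodes and using $\T\le\frakT$ closes the mean part at $\lesssim H^{-2}\|v\|_{\Vtr}^2$; combining the two parts yields \eqref{eq:stabhIHT}.

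The main obstacle is the inverse-type estimate $\|\IH g\|_{\L}\lesssim H^{-1}\|g\|_{H^{-1}(\Omega)}$, which is precisely where the factor $H^{-1}$ appears and where the scale-compatibility hypothesis $\T\le CH$ is used to keep the temporal difference quotients compatible with the spatial inverse estimate. Its proof is delicate because the chosen $\IH=\piH\circ\PiH$ is not $\L$-self-adjoint, so the clean duality computation available for the $\L$-orthogonal projection must be replaced by an argument through the adjoint $\IH^{\ast}$ (or a local, element-wise estimate). The second, milder subtlety is justifying the point-in-time energy identity and the mean-zero cancellation, which rely on the piecewise-affine-in-time structure of $\hVht$ rather than on a general element of $\Vtr$.
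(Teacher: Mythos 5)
Your proof of \eqref{eq:stabIHT} coincides with the paper's (Jensen's inequality in time plus \eqref{eq:IH1}), and the ``inverse-type estimate'' $\|\IH g\|_{\L}\lesssim H^{-1}\|g\|_{H^{-1}(\Omega)}$ that you correctly identify as the crux of \eqref{eq:stabhIHT} is exactly the paper's estimate \eqref{eq:estDual}; the non-self-adjointness issue you flag is resolved there by factoring $\IH=\piH\circ\PiH$ and running the duality through the (elementwise self-adjoint) projection $\PiH$. Your treatment of the time-derivative part via the explicit difference quotients $\T^{-1}\IH\delta_i$ is a sound substitute for the paper's appeal to $H^1$-stability of nodal interpolation in time, and it works for every $v\in\Vtr$.

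The genuine gap is in your mean part, and it is a gap of scope: the lemma asserts \eqref{eq:stabhIHT} for all $v\in\Vtr$, but your argument only covers $v\in\hVht$. You reduce $\intT\|\nabla\overline{\hIHT v}\|_{\L}^2\dt$ to the coarse nodal values $\|v(\cdot,T_i)\|_{\L}$ and control these by $\|v\|_{\Vtr}$ via the energy identity (which needs $v(0)=0$) and the fine-interval cancellation $\int_{t_{j-1}}^{t_j}\la\dot v,v-\overline v\ra\dt=0$ (which needs $v$ piecewise affine in time on the fine grid). Neither is available for a general element of $\Vtr$, and the intermediate inequality $\|v(\cdot,T_i)\|_{\L}\lesssim\|v\|_{\Vtr}$ is in fact \emph{false} on $\Vtr$: take $v=\phi\,g$ with $\phi$ a spatial oscillation on scale $1/n$ (so $\|\nabla\phi\|_{\L}\sim 1$, $\|\phi\|_{\L}\sim n^{-1}$, $\|\phi\|_{H^{-1}(\Omega)}\sim n^{-2}$) and $g$ a temporal profile with zero average on every fine interval, $g(0)=0$, $g(T_1)\sim n^2$, $\|\dot g\|_{L^2(0,\frakT)}\sim n^2$; then $\|v\|_{\Vtr}$ stays bounded (the gradient part of the norm only sees $\overline v\equiv 0$, and the $H^{-1}$ part damps $\dot g$ by $\|\phi\|_{H^{-1}(\Omega)}$), while $\|v(\cdot,T_1)\|_{\L}\sim n\to\infty$. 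This is precisely why the paper never passes through point values of $v$: it splits $\overline{\nabla\hIHT v}$ into $\overline{\nabla\IH v}=\nabla\IH\overline v$, handled with \emph{no} loss in $H$ by \eqref{eq:IH1}, plus the temporal interpolation error $\overline{(\Id-\IT)\nabla\IH v}$, bounded by $C\,\T^2\intT\|\nabla\IH\tddt v\|_{\L}^2\dt\lesssim(\T^2/H^4)\intT\|\tddt v\|^2_{H^{-1}(\Omega)}\dt$, and this last term is the one place where the hypothesis $\T\le CH$ genuinely enters. Note that in your write-up $\T\le CH$ is never actually used, contrary to your closing claim; this is consistent with the fact that you prove a different, purely discrete statement. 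That discrete statement would suffice for every application of the lemma in the paper (all of which concern functions in $\hVht$, which vanish at $t=0$), but it does not prove the lemma as stated, and the reliance on discreteness is not the ``milder subtlety'' you describe --- it is the step where the argument breaks for general $v\in\Vtr$.
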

\begin{proof}
For any $v \in \Vte$ and $i = 1,\ldots,\NT$, we have
\begin{align*}
\|\IHT v \id_{[T_{i-1},T_{i}]}\|^2_{\Vte} & = \int_{T_{i-1}}^{T_{i}} \|\nabla\IHT v\|^2_{\L} \dt = \int_{T_{i-1}}^{T_{i}}  \int_{{\Omega}} \big|\T^{-1}\int_{T_{i-1}}^{T_{i}} \nabla \IH v\big|^2\ds\dt \nonumber\\
& \leq \int_{T_{i-1}}^{T_{i}} \int_\Omega \T^{-1}\int_{T_{i-1}}^{T_{i}} |\nabla \IH v\big|^2 \ds \dt \label{eq:boundIHT}
\leq \Cint^2\, \|v \id_{[T_{i-1},T_{i}]}\|^2_{\Vte} \nonumber
\end{align*}
using Jensen's inequality and~\eqref{eq:IH1}. Globally, we thus have
\begin{equation*}
\|\IHT v\|_{\Vte} \leq \Cint\,\|v\|_{\Vte}.
\end{equation*}
For the proof of~\eqref{eq:stabhIHT}, recall that $\IH = \piH\circ\PiH$. Further, we note that 
\begin{equation}\label{eq:estDual}
\begin{aligned}
\|\IH v\|_{H^{-1}(\Omega)} &\leq \|\IH v\|_{L^2(\Omega)} \leq C\,\|\PiH v\|_{L^2(\Omega)} = C \sup_{w \in L^2(\Omega)}\frac{\langle \PiH v,w\rangle}{\|w\|_{L^2(\Omega)}} 
\\&\leq C\sup_{\wH \in \VH}\frac{\langle v,\wH\rangle}{\|\wH\|_{L^2(\Omega)}} \leq C \sup_{\wH \in \VH}\frac{\Cinv\,\langle v,\wH\rangle}{H\,\|\nabla\wH\|_{L^2(\Omega)}} \leq \Cst H^{-1}\|v\|_{H^{-1}(\Omega)}
\end{aligned}
\end{equation}
using the stability of $\piH$ (cf.~\cite[Lem.~6.1]{ErnG17}), the stability of the projection $\PiH$, and the inverse inequality
\begin{equation}\label{eq:invIneq}
\|\nabla \vH\|_{\L} \leq \Cinv H^{-1} \|\vH\|_{\L}, \quad \vH \in \VH.
\end{equation}
Let now $v \in \Vtr$. With the $H^1$-stability of the nodal interpolation in time~\cite[Thm.~1]{Dic13}, the $L^2$-stability of the local $L^2$-projection $\overline{(\cdot)}$, and the Friedrichs inequality on each coarse interval, we obtain
\begin{align*}
\|\hIHT v\|_{\Vtr}^2 & = \intT \big\|\tddt\hIHT v \big\|_{H^{-1}(\Omega)}^2 \dt + \intT \intOm\big|\overline{\nabla \hIHT v}\big|^2 \dx\dt \nonumber\\
& \leq \intT \big\|\tddt\IH v \big\|_{H^{-1}(\Omega)}^2 \dt + 2\intT \intOm\big|\overline{\nabla \IH v}\big|^2 \dx\dt  \\&\hspace{7.2cm} + 2\intT \intOm\big|\overline{(\Id - \IT)\nabla \IH v}\big|^2 \dx\dt 
\\
& \leq \intT \big\|\tddt\IH v \big\|_{H^{-1}(\Omega)}^2 \dt + 2\intT \big\|\nabla \IH \overline{v}\big\|_{L^2(\Omega)}^2 \dt + C\,\T^2\intT \big\|\nabla \IH \tddt v\big\|_{L^2(\Omega)}^2 \dt. \nonumber
\end{align*}
Utilizing the stability of $\IH$ (cf.~\eqref{eq:IH}), \eqref{eq:estDual}, and the inverse inequality~\eqref{eq:invIneq}, we further estimate
\begin{align*}
\intT \big\|\tddt\IH v &\big\|_{H^{-1}(\Omega)}^2 \dt + 2\intT \big\|\nabla \IH \overline{v}\big\|_{L^2(\Omega)}^2 \dt + C\,\T^2\intT \big\|\nabla \IH \tddt v\big\|_{L^2(\Omega)}^2 \dt \nonumber\\
& \leq \Cst^2H^{-2}\intT \big\|\tddt v \big\|_{H^{-1}(\Omega)}^2 \dt + 2\,\Cint^2\intT \big\|\nabla \overline{v}\big\|_{L^2(\Omega)}^2 \dt 
\\&\qquad\qquad\qquad\qquad\qquad\qquad+ C\Cinv^2\Cst^2\Big(\frac{\T^2}{H^4}\Big)\intT \big\|\tddt v\big\|_{H^{-1}(\Omega)}^2 \dt. \nonumber
\end{align*}
With $\T \leq CH$, we finally get
\begin{equation*}
\|\hIHT v\|^2_{\Vtr} \leq \hCint^2H^{-2}\,\|v\|^2_{\Vtr}. \qedhere
\end{equation*}
\end{proof}

\subsection{Ideal method}

In this subsection, we introduce a space-time multiscale method based on a correction of coarse functions by appropriate fine-scale functions.   
The proposed method is based on the Variational Multiscale Method, which was developed in~\cite{Hug95,HugFMQ98}, {and incorporates ideas from the LOD method introduced in~\cite{MalP14}}. The main idea is to decompose the solution into a coarse part in $\hVHT$ and a remainder part in $\hWht$  and then consider~\eqref{eq:exsol} for test functions in the coarse test space $\VHT$ and the fine test space $\Wht$ separately. Consequently, we arrive at two equations, one on the coarse scale and one on the fine scale. The coupling of these two equations can be seen as a \emph{correction} of the coarse-scale part by appropriate fine functions, whose main purpose is to include the fine-scale space-time behavior of the diffusion coefficient into the coarse-scale problem. This leads to a coarse-scale space with improved approximation properties.

We phrase the method~\eqref{eq:VMM} below in the classical framework of the Variational Multiscale Method. Note that the second equation~\eqref{eq:VMMfine} defines the \emph{correction operator} $\calQ\colon\hVht\to\hWht$ that is employed in the first equation~\eqref{eq:VMMcoarse}. Altogether, the (ideal) multiscale method reads: find $\tuHT = \uHT + \calQ \uHT \in (\Id+\calQ)\hVHT$ such that
\begin{subequations}\label{eq:VMM}
\begin{align}
\intT \la\tddt(\Id+\calQ)\uHT,\vHT\ra + a((\Id+\calQ)\uHT,\vHT) \dt = \intT \la f, \vHT\ra \dt,&\label{eq:VMMcoarse}\\
\intT \la\tddt\Qu, \wht\ra + a(\Qu, \wht) \dt = -\intT \la\duHT,\wht\ra + a(\uHT,\wht) \dt & \label{eq:VMMfine}
\end{align} 
\end{subequations}
for all $\vHT \in \VHT$ and $\wht \in \Wht$. 
Note that~\eqref{eq:VMM} is well-posed, which follows from the fact that~\eqref{eq:VMM} is equivalent to~\eqref{eq:discretizedsol} with slightly adjusted right-hand side $\intT\langle f,\IHT \vht\rangle \dt$ and the existence and uniqueness of the \emph{correction operator} $\calQ\colon\hVht\to\hWht$ in~\eqref{eq:VMMfine} as well as the boundedness of $\IHT$; cf.~Lemma~\ref{lem:boundInterpolation}. Existence and uniqueness of $\calQ$ are a direct consequence of Lemma~\ref{lem:Qwellposed}, which is proved below.
We emphasize that this construction results in an alternative decomposition $\hVht = (\Id+\calQ)\hVHT \oplus \hWht$, where the first space turns out to have favorable approximation properties.
	
With the well-posedness of~\eqref{eq:VMM}, we can state the following theorem that quantifies the error between the solution $\uht \in \hVht$ of~\eqref{eq:discretizedsol} and the solution $\tuHT = (\Id+\calQ)\uHT$ of the ideal method~\eqref{eq:VMM}. 
\begin{theorem}[Error of the ideal method]\label{t:errIdeal}
Assume that $f\in L^2(L^2)\cap H^1(H^{-1})$. Then 
the error between the solutions $\uht$ and $\tuHT$ satisfies
\begin{equation}\label{eq:errIdeal}
\|\tuHT - \uht\|_{\Vtr} \leq C\, (H + \T) \,\|f\|_{L^2(L^2)\cap H^1(H^{-1})}.
\end{equation}
\end{theorem}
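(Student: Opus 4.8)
The plan is to combine the Petrov--Galerkin structure of the method with the inf-sup stability of Lemma~\ref{lem:infsup}. Since $e := \tuHT - \uht$ again lies in $\hVht$, that lemma gives
\begin{equation*}
\|e\|_{\Vtr} \leq c_\frakA^{-1} \sup_{\wht \in \Vht} \frac{\frakA(e,\wht)}{\|\wht\|_{\Vte}},
\end{equation*}
so the whole argument reduces to bounding $\frakA(e,\wht)$ for an arbitrary test function $\wht \in \Vht$. First I would record two orthogonality relations. Comparing the coarse equation~\eqref{eq:VMMcoarse} with~\eqref{eq:discretizedsol} tested against $\vHT \in \VHT \subset \Vht$ yields $\frakA(e,\vHT) = 0$ for all $\vHT \in \VHT$, and the defining relation~\eqref{eq:VMMfine} of the corrector rewrites as $\frakA(\tuHT,\wht) = 0$ for all $\wht \in \Wht$. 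Decomposing $\wht = \IHT\wht + (\Id - \IHT)\wht$ along $\Vht = \VHT \oplus \Wht$ (using that $\IHT$ is idempotent, so that $(\Id-\IHT)\wht \in \Wht$) and testing~\eqref{eq:discretizedsol} against the fine part, these two relations collapse the bilinear form to the pure data term
\begin{equation*}
\frakA(e,\wht) = -\intT \la f, (\Id - \IHT)\wht\ra \dt.
\end{equation*}

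It then remains to show that the right-hand side is $\lesssim (H+\T)\,\|f\|_{L^2(L^2)\cap H^1(H^{-1})}\,\|\wht\|_{\Vte}$, which I would do by splitting $(\Id-\IHT)\wht = (\Id-\IH)\wht + (\IH\wht - \IHT\wht)$ into a spatial and a temporal contribution. For the spatial part, the interpolation estimate~\eqref{eq:IH1} gives $\|(\Id-\IH)\wht\|_{\L} \leq \Cint H\,\|\nabla\wht\|_{\L}$ pointwise in time; pairing with $f \in L^2(L^2)$ and integrating produces a factor $H$ and a bound $\lesssim H\,\|f\|_{L^2(L^2)}\,\|\wht\|_{\Vte}$. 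Here the $L^2$-regularity of $f$ is indispensable, since the $H^{-1}$-norm alone would not expose the extra power of $H$. For the temporal part, writing $g := \IH\wht$, the term $g - \IHT\wht$ is exactly the deviation of $g$ from its average over each coarse interval $[T_{i-1},T_i]$ and therefore integrates to zero on every such interval. Consequently, subtracting from $f$ its own (piecewise-constant) coarse-interval time average leaves the pairing unchanged, and a Poincaré inequality in time on each coarse interval bounds this modified datum by $\lesssim \T\,\|\dot f\|_{L^2(H^{-1})}$ in $L^2(H^{-1})$; the stability of $\IH$ from~\eqref{eq:IH} together with the $L^2(H^1_0)$-contractivity of the temporal averaging controls the remaining factor by $C\,\|\wht\|_{\Vte}$. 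This contributes $\lesssim \T\,\|\dot f\|_{L^2(H^{-1})}\,\|\wht\|_{\Vte}$.

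Collecting both contributions, dividing by $\|\wht\|_{\Vte}$, and inserting into the inf-sup bound yields~\eqref{eq:errIdeal}. I expect the temporal contribution to be the main obstacle: in contrast to the spatial estimate, which is the familiar LOD argument, one cannot directly extract a power of $\T$ because $g = \IH\wht$ is only piecewise constant in time and carries no usable temporal derivative. The decisive idea is the zero-average (orthogonality) trick, which transfers the $\T$-scaling onto $f$ minus its coarse-time average and thereby onto the temporal regularity of the data guaranteed by the assumption $f \in H^1(H^{-1})$.
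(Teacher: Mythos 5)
Your proposal is correct and follows essentially the same route as the paper's proof: the inf-sup bound from Lemma~\ref{lem:infsup}, the Galerkin orthogonality relations collapsing $\frakA(\tuHT-\uht,v)$ to $-\intT\la f,(\Id-\IHT)v\ra\dt$, and the split into a spatial part handled by~\eqref{eq:IH1} and a temporal part handled by the zero-mean trick (moving the coarse-interval average onto $f$) plus a Poincar\'e inequality in time. The only cosmetic difference is that the paper reads off the identity directly inside the inf-sup estimate rather than stating the two orthogonality relations separately; the key idea you flag as decisive (transferring the $\T$-scaling onto $f-\bar f$ and hence onto $\dot f$) is exactly the paper's argument.
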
 

\begin{proof}
Since $\tuHT - \uht \in \hVht$, we get with the inf-sup condition in Lemma~\ref{lem:infsup} the existence of a function $v \in \Vht$ with $\|v\|_{\Vte}=1$ such that
\begin{equation}\label{proof:errIdeal:1}
\begin{aligned}
c_\frakA\,\|\tuHT - \uht\|_{\Vtr}  \leq \frakA(\tuHT - \uht,v)
= - \intT \la f,(\Id-\IHT)v\ra \dt 
\end{aligned}
\end{equation}
where we also use \eqref{eq:VMMcoarse} and \eqref{eq:discretizedsol}. 
To bound the right-hand side of~\eqref{proof:errIdeal:1}, we compute
\begin{align*}
\Big|\intT &\la f,(\Id-\IHT)v\ra \dt\,\Big| = \Big|\intT \la f,(\Id-\IH)v + (\IH-\IHT)v\ra \dt\,\Big|
\\ & \leq \Big|\intT \la f,(\Id-\IH)v \ra \dt\,\Big| + \Big|\sum_{i = 1}^{\NT}\int_{T_{i-1}}^{T_i}\big\la f,\IH v -\T^{-1}\int_{T_{i-1}}^{T_i}\IH v\ds\big\ra \dt\,\Big|
\\ & \leq \intT \|f\|_{\L}\Cint H\,\|\nabla v(t)\|_{\L}\dt + \Big|\sum_{i = 1}^{\NT}\int_{T_{i-1}}^{T_i}\big\la f-\T^{-1}\int_{T_{i-1}}^{T_i} f\ds,\IH v\big\ra \dt\,\Big|
\\ & \leq \intT \|f\|_{\L}\Cint H\,\|\nabla v(t)\|_{\L}\dt + \intT \Cpc \T\,\|\dot f\|_{H^{-1}({\Omega})}\|\nabla\IH v\|_{\L} \dt
\\ & \leq \Cint H\, \|f\|_{L^2(L^2)} + \Cpc\Cint \T\, \|\dot f\|_{L^2(H^{-1})}
\end{align*}
using~\eqref{eq:IH1} and the Poincar\'e inequality with constant $\Cpc$ on multiple sub-intervals of $[0,\frakT]$. Combining this with \eqref{proof:errIdeal:1}, we deduce~\eqref{eq:errIdeal}.
\end{proof}

The combination of Theorem~\ref{t:errIdeal} and~\eqref{eq:erruht} directly provides an estimate for the full error $u - \tuHT$. If $h$ and $\tau$ are chosen such that any fine quantities are resolved, the order $\mathcal{O}(H + \T)$ can be maintained for the full error as well.  

\subsection{Well-posedness of the corrections}
\label{ss:corrprob}	

In order to avoid an explicit characterization of the spaces $\hWht$ and $\Wht$ and to prove well-posedness of the corrections, we reformulate~\eqref{eq:VMMfine} as a constraint problem posed in the full discrete space $\hVht$ and with test functions in $\Vht$. 
For $z \in \hVht$, its correction $\calQ z$ can equivalently be characterized as a~function in $\hVht$ that solves
\begin{subequations}\label{eq:constraintCorProb}
\begin{align}
\frakA(\calQ z,v) + \sum_{i = 1}^{\NT} \int_{T_{i-1}}^{T_i}\la\lambda_i, \IH v\ra \dt &= -\frakA(z,v)\label{eq:constraint1}\\ 
\sum_{j = 1}^{\NT}\la\IH\calQ z(T_j),\mu_j\ra &= 0, \label{eq:constraint2}
\end{align}
\end{subequations}
for all $v \in \Vht,\, \mu_j \in \VH$, $j = 1,\ldots,\NT$, where $(\lambda_1,\ldots,\lambda_{\NT})\in \VH \times \ldots \times \VH$ are the associated Lagrange multipliers.
The following lemma states the well-posedness of~\eqref{eq:constraintCorProb} and therefore also the well-posedness of~\eqref{eq:VMMfine}. 
\begin{lemma}[Existence and uniqueness of the correction]\label{lem:Qwellposed}
Problem~\eqref{eq:constraintCorProb} has a unique solution $(\calQ z,\lambda_1,\ldots,\lambda_{\NT}) \in \hVht \times \VH \times \ldots \times \VH$ and 
\begin{equation}\label{eq:stabPsi}
\|\calQ z\|_{\Vtr} \leq c_\frakA^{-1}
\sqrt{2}\max\{1,\bar\beta\}\,\|z\|_{\Vtr}.
\end{equation}
\end{lemma}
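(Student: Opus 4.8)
The plan is to read \eqref{eq:constraintCorProb} as the saddle-point reformulation of the reduced corrector problem \eqref{eq:VMMfine} and to reduce everything to a single inf-sup condition on the kernel spaces. Since \eqref{eq:constraint2} tested against all $\mu_j\in\VH$ forces $\IH\calQ z(T_j)=0$ for every $j$, i.e. $\calQ z\in\hWht$, and since for any $v\in\Wht$ the multiplier term in \eqref{eq:constraint1} drops out (because $\int_{T_{i-1}}^{T_i}\IH v\dt=0$ on each coarse interval, so $\sum_i\int_{T_{i-1}}^{T_i}\la\lambda_i,\IH v\ra\dt=\sum_i\la\lambda_i,\int_{T_{i-1}}^{T_i}\IH v\dt\ra=0$), problem \eqref{eq:constraintCorProb} is equivalent to: find $\calQ z\in\hWht$ with $\frakA(\calQ z,v)=-\frakA(z,v)$ for all $v\in\Wht$, followed by a determination of the multipliers $\lambda_i$ from the $\VHT$-directions of \eqref{eq:constraint1}. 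I would therefore first settle well-posedness and stability of the reduced problem on $\hWht\times\Wht$ and only afterwards recover the $\lambda_i$.

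The heart of the matter is a restricted inf-sup estimate $\inf_{w\in\hWht}\sup_{v\in\Wht}\frac{\frakA(w,v)}{\|w\|_{\Vtr}\|v\|_{\Vte}}\ge c_\frakA$, with the very constant $c_\frakA$ of Lemma~\ref{lem:infsup} (this is what makes the factor $c_\frakA^{-1}$ in \eqref{eq:stabPsi} appear). I would obtain it by revisiting the supremizer used to prove the full inf-sup in \cite{UrbP14} and checking that it can be taken inside $\Wht$. The part of the supremizer stemming from the time derivative is compatible with the constraint for free: for $w\in\hWht$ one has $\int_{T_{i-1}}^{T_i}\IH\dot w\dt=\IH w(T_i)-\IH w(T_{i-1})=0$, so this part already lies in $\Wht$. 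The elliptic part (morally $\overline{w}$) does \emph{not} satisfy the constraint and must be replaced by its $\Wht$-component $\overline{w}-\IHT\overline{w}$, whose norm is controlled by the stability of $\IHT$ from Lemma~\ref{lem:boundInterpolation}. \textbf{Controlling the cross term introduced by this projection, and checking that it does not swallow the coercive contribution of the elliptic part, is the main obstacle} of the proof; it is here that the compatible structure of the two projections $\hIHT$ and $\IHT$ (each acting only in its own variable) has to be exploited.

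Granting the restricted inf-sup, the bound \eqref{eq:stabPsi} follows from a continuity estimate of the right-hand side. For $v\in\Wht\subset\Vht$, which is piecewise constant on the fine time grid, Assumption~\ref{ass:strucA} lets me replace $z$ by its temporal mean, $\intT a(z,v)\dt=\intT a(\overline z,v)\dt$, since $A$ is piecewise constant in time on the $\tau$-intervals. Using $\|v\|_{L^2(H^1_0)}=\|v\|_{\Vte}$ for the transport term and $|a(\overline z,v)|\le\bar\beta\,\|\nabla\overline z\|_{\L}\|\nabla v\|_{\L}$ for the elliptic term, I get $|\frakA(z,v)|\le\big(\|\dot z\|_{L^2(H^{-1})}+\bar\beta\,\|\nabla\overline z\|_{\L}\big)\|v\|_{\Vte}\le\sqrt2\max\{1,\bar\beta\}\,\|z\|_{\Vtr}\|v\|_{\Vte}$, the last step by Cauchy--Schwarz on the pair $(\|\dot z\|_{L^2(H^{-1})},\|\nabla\overline z\|_{\L})$ together with the definition of $\|\cdot\|_{\Vtr}$. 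Combining this with the restricted inf-sup and the identity $\frakA(\calQ z,v)=-\frakA(z,v)$ on $\Wht$ yields $c_\frakA\,\|\calQ z\|_{\Vtr}\le\sup_{v\in\Wht}\frac{-\frakA(z,v)}{\|v\|_{\Vte}}\le\sqrt2\max\{1,\bar\beta\}\,\|z\|_{\Vtr}$, which is exactly \eqref{eq:stabPsi}.

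It remains to produce the multipliers and to close the well-posedness of the full system. Both $\hVht$ and $\Vht$ are finite-dimensional of equal dimension, and the number of unknowns $(\calQ z,\lambda_1,\dots,\lambda_{\NT})$ matches the number of equations, so it suffices to rule out a nontrivial homogeneous solution: the restricted inf-sup gives $\calQ z=0$, after which $\sum_i\int_{T_{i-1}}^{T_i}\la\lambda_i,\IH v\ra\dt=0$ for all $v\in\Vht$ forces $\lambda_i=0$ by the discrete inf-sup of the multiplier form (seen by testing with $v=\sum_i\lambda_i\chi_i$ and using $\IH\lambda_i=\lambda_i$). Existence of the $\lambda_i$ in the inhomogeneous case follows from the same surjectivity, applied to the residual of \eqref{eq:constraint1} in the $\VHT$-directions once $\calQ z$ has been found. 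This delivers existence, uniqueness, and the stability estimate \eqref{eq:stabPsi}.
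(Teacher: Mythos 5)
Your reduction of \eqref{eq:constraintCorProb} to the kernel problem (find $\calQ z\in\hWht$ with $\frakA(\calQ z,v)=-\frakA(z,v)$ for all $v\in\Wht$), your continuity bound $|\frakA(z,v)|\le\sqrt{2}\max\{1,\bar\beta\}\|z\|_{\Vtr}\|v\|_{\Vte}$, and your recovery of the multipliers (which is essentially the paper's inf-sup \eqref{eq:infsupB} for the forms $\frakB_i$) are all sound. But the entire lemma hinges on the step you yourself flag as ``the main obstacle'' and then leave unresolved: the restricted inf-sup condition on $\hWht\times\Wht$, and with the specific constant $c_\frakA$. That is a genuine gap, not a technicality. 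There is no reason the constant $c_\frakA$ of Lemma~\ref{lem:infsup} survives the replacement of the elliptic part of the supremizer by its $\Wht$-component $\overline{w}-\IHT\overline{w}$; the projection error is exactly the cross term you cannot control. Note also that the paper never establishes such a kernel inf-sup with constant $c_\frakA$: the only inf-sup it proves on $\hWht\times\Wht$ is Lemma~\ref{lem:infsupW}, with the strictly weaker constant $\tilde c_\frakA=c_\frakA^2/(\sqrt{2}\max\{1,\bar\beta\})$, and that lemma is \emph{deduced from} Lemma~\ref{lem:Qwellposed} via the test-side correction operator $\calC$ of Remark~\ref{rem:corrC} --- so any attempt to fill your gap by appealing to it would be circular. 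A smaller but real flaw in the same step: your claim that the time-derivative part of the supremizer of \cite{UrbP14} lies in $\Wht$ ``for free'' conflates $\dot w$ with its dual (Riesz) representative; the constraint $\T^{-1}\int_{T_{i-1}}^{T_i}\IH(\cdot)\dt=0$ must hold for the representative, not for $\dot w$, so even that part is not automatic.

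The paper's proof takes a different route that avoids any kernel inf-sup. It keeps the constrained formulation \eqref{eq:constraintCorProb} with its two families of constraint forms, $\frakB_i(\lambda_i,w)=\int_{T_{i-1}}^{T_i}\la\lambda_i,\IH w\ra\dt$ and $\frakC_j(v,\mu_j)=\la\IH v(T_j),\mu_j\ra$, and invokes the abstract generalized saddle-point result \cite[Cor.~2.1]{BerCM88}. Its hypotheses are: the \emph{global} inf-sup of $\frakA$ on $\hVht\times\Vht$ (already available from Lemma~\ref{lem:infsup}, no restriction to kernels needed), the inf-sup conditions \eqref{eq:infsupB} and \eqref{eq:infsupC} for $\frakB$ and $\frakC$ (verified by the explicit candidates $w=\sum_i\lambda_i\id_{[T_{i-1},T_i]}$ and $v=\sum_j\mu_j\eta_j$ with temporal hat functions $\eta_j$), and the continuity bound on the right-hand side, which coincides with the bound you proved. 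The stability estimate \eqref{eq:stabPsi} with constant $c_\frakA^{-1}\sqrt{2}\max\{1,\bar\beta\}$ then comes out of the abstract theorem, because the constraint right-hand side in \eqref{eq:constraint2} is zero. If you want to salvage your plan, you would either have to prove your restricted inf-sup directly (accepting whatever constant you can get, and then \eqref{eq:stabPsi} would hold with that constant instead of $c_\frakA$), or follow the paper and let the saddle-point machinery absorb the difficulty; as written, your argument does not establish the lemma.
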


\begin{proof}
Recall the definition of the bilinear form $\frakA$ in~\eqref{eq:defA} and let further 
\begin{align*}
\frakB_i(\lambda_i,w) &:= \int_{T_{i-1}}^{T_i}\la\lambda_i, \IH w\ra \dt,\quad 1 \leq i \leq \NT,\\
\frakC_j(v,\mu_j) &:= \la\IH v(T_j),\mu_j\ra,\quad 1 \leq j \leq \NT.
\end{align*}
By~\cite[Cor.~2.1]{BerCM88}, the well-posedness of~\eqref{eq:constraintCorProb} and the estimate~\eqref{eq:stabPsi} follow from Lemma~\ref{lem:infsup}, the inf-sup conditions~\eqref{eq:infsupB} and~\eqref{eq:infsupC}, as well as an upper bound for the right-hand side of~\eqref{eq:constraint1}, which are proved below.\medskip

\emph{Inf-sup condition for $\sum_i\frakB_i$.}
Let $(\lambda_1,\ldots,\lambda_{\NT}) \in \VH \times \ldots \times \VH$ be arbitrary and nonzero. The explicit piecewise constant choice $w = \sum_{i = 1}^{\NT}\lambda_i \id_{[T_{i-1},T_i]} \in \Vht$ then yields
\begin{align*}
\sup_{w \in \Vht} \frac{\sum_{i = 1}^{\NT}\int_{T_{i-1}}^{T_i}\la\lambda_i, \IH w\ra \dt}{ \|(\lambda_1,\ldots,\lambda_{\NT})\|_{\VH \times \ldots \times \VH}\,\|w\|_{\Vte}} 
& \geq \frac{\sum_{i = 1}^{\NT}\T\,\|\lambda_i\|_{\L}^2}{ \big(\sum_{i = 1}^{\NT}\|\nabla\lambda_i\|^2_{\L}\big)^{1/2}\,\big(\sum_{i = 1}^{\NT}\T\,\|\nabla\lambda_i\|^2_{\L}\big)^{1/2}} 
\\&\geq \Cinv^{-2} H^2 \T^{1/2} > 0
\end{align*} 
using the norm 
$\|(\lambda_1,\ldots,\lambda_{\NT})\|^2_{\VH \times \ldots \times \VH} = \sum_{i = 1}^{\NT} \|\nabla \lambda_i\|^2_{\L}$
and the inverse inequality~\eqref{eq:invIneq}. Therefore, we directly get
\begin{equation}\label{eq:infsupB}
\adjustlimits\inf_{(\lambda_1,\ldots,\lambda_{\NT}) \in \VH \times \ldots \times \VH\,}\sup_{w \in \Vht} \frac{\sum_{i = 1}^{\NT}\frakB_i(\lambda_i,w)}{\|(\lambda_1,\ldots,\lambda_{\NT})\|_{\VH \times \ldots \times \VH}\,\|w\|_{\Vte}} \geq \Cinv^{-2} H^2 \T^{1/2}.
\end{equation}
\medskip

\emph{Inf-sup condition for $\sum_j\frakC_j$.}
To show this inf-sup condition, we choose $(\mu_1,\ldots,\mu_{\NT}) \in \VH \times \ldots \times \VH$ and set $\eta_j(t) = (t/\T - j + 1)\id_{[T_{j-1},T_j]} + (j - t/\T + 1)\id_{(T_{j},T_{j+1}]}$. With the choice $v = \sum_{j=1}^{\NT}\mu_j\eta_j \in \hVht$, we compute 
\begin{align*}
\|v\|^2_{\Vtr} 
&= \intT \big\|\sum_{j = 1}^{\NT}  (\nabla \mu_j) \overline\eta_j \big\|_{\L}^2 + \big\|\sum_{j = 1}^{\NT}\mu_j\dot\eta_j \big\|^2_{H^{-1}({\Omega})} \dt \\
& \leq 2\intT \sum_{j = 1}^{\NT} \big( \|\nabla \mu_j\|_{\L}^2 \overline\eta_j^2 + \| \mu_j\|^2_{\L} \dot\eta_j^2\big) \dt \\
& \leq \sum_{j = 1}^{\NT} \big(2\T\, \|\nabla \mu_j\|_{\L}^2 + \tfrac{4}{\T}\,\| \mu_j\|^2_{\L}\big).
\end{align*}
Therefore, we have that
\begin{equation*}
\|v\|_{\Vtr} = \big\|{\sum_{j=1}^{\NT}\eta_j \mu_j}\big\|_{\Vtr} \leq \Big(2\T\sum_{j = 1}^{\NT} \|\nabla \mu_j\|^2_{\L}\Big)^{1/2} + \Big(\tfrac{4}{\T}\sum_{j = 1}^{\NT} \|\mu_j\|^{{2}}_{\L}\Big)^{1/2}.
\end{equation*}
Using this, Young's inequality, and the inverse inequality~\eqref{eq:invIneq}, we obtain
\begin{align*}
\sup_{v \in \hVht}&\frac{\sum_{j = 1}^{\NT}\la\IH v(T_j),\mu_j\ra}{\|v\|_{\Vtr}\,\|(\mu_1,\ldots,\mu_{\NT})\|_{\VH \times \ldots \times \VH}} \\&\geq \frac{\sum_{j = 1}^{\NT}\|\mu_j\|_{\L}^2}{\|{\sum_{j=1}^{\NT} \mu_j\eta_j}\|_{\Vtr}\,\big(\sum_{j = 1}^{\NT}\|\nabla \mu_j\|_{\L}^2\big)^{1/2}}\\
&\geq \frac{\sum_{j = 1}^{\NT}\|\mu_j\|_{\L}^2}{\big(\T\, \Cinv^2H^{-2} + 2\,\T^{-1} + \Cinv^2 H^{-2}\big)\sum_{j = 1}^{\NT}\|\mu_j\|_{\L}^2}
=: \gamma(H,\T) > 0.
\end{align*}
As above, taking the infimum yields
\begin{equation}\label{eq:infsupC}
\adjustlimits\inf_{(\mu_1,\ldots,\mu_{\NT}) \in \VH \times \ldots \times \VH\,}\sup_{v \in \hVht} \frac{\sum_{j = 1}^{\NT}\frakC_j(v,\mu_j)}{\|v\|_{\Vtr}\,\|(\mu_1,\ldots,\mu_{\NT})\|_{\VH \times \ldots \times \VH}} \geq \gamma(H,\T).
\end{equation}
\medskip

\emph{Bound of the right-hand side.}
The final step consists in proving a stability bound for the right-hand side of~\eqref{eq:constraint1} as a function in the dual space of $\Vte$.  
We choose an arbitrary function $v \in \Vht$ and compute
\begin{align*}
\frakA(z,v)
& \leq \Big(\|\dot z\|_{H^{-1}(\Omega)} + \bar\beta\,\|\nabla \overline z\|_{L^2(\Omega)} \Big)\|v\|_{\Vte}
\leq
\sqrt{2}\max\{1,\bar\beta\}\|z\|_{\Vtr}\|v\|_{\Vte}.
\end{align*}

With the above inf-sup conditions and the bound of the right-hand side, we can now apply~\cite[Cor.~2.1]{BerCM88}, which finalizes the proof.
\end{proof}

\begin{remark}[Additional correction operator]\label{rem:corrC}
The arguments in the proof of Lemma~\ref{lem:Qwellposed} also imply that the correction operator $\calC\colon\Vht \to \Wht$, defined by 
\begin{equation}\label{eq:corrC}
\frakA(v,\calC w) = - \frakA(v, w)
\end{equation}
for all $v \in \hWht$, is well-defined and 
\begin{equation}\label{eq:boundC}
\|\calC w\|_{\Vte} \leq c_{\frakA}^{-1}\sqrt{2}\max\{1,\bar\beta\}\,\|w\|_{\Vte}.
\end{equation}
\end{remark}

With Lemma~\ref{lem:Qwellposed} and Remark~\ref{rem:corrC}, we can show the following inf-sup condition for the spaces $\hWht$ and $\Wht$, which will be of use in the analysis of the localized version of the method in the next section.

\begin{lemma}[Inf-sup condition]\label{lem:infsupW}
It holds
\begin{equation*}
\adjustlimits\inf_{\vht \in \hWht}\sup_{\wht \in \Wht} \frac{\frakA(\vht,\wht)}{\|\vht\|_{\Vtr}\,\|\wht\|_{\Vte}} \geq \tilde c_\frakA 
\end{equation*}
with $\tilde{c}_\frakA := c_{\frakA}^2/(\sqrt{2}\max\{1,\bar\beta\})$. 
\end{lemma}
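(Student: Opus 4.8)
The plan is to reduce the desired inf-sup condition on the pair $\hWht\times\Wht$ to the already-established global inf-sup condition of Lemma~\ref{lem:infsup} on $\hVht\times\Vht$, using the correction operator $\calC$ from Remark~\ref{rem:corrC} as a bounded lift from $\Vht$ into the fine test space $\Wht$. The crucial point is that the defining relation $\frakA(v,\calC w)=-\frakA(v,w)$ holds exactly for test functions $v\in\hWht$, which is precisely the trial space over which the infimum is taken.

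Concretely, I would fix an arbitrary nonzero $\vht\in\hWht\subset\hVht$ and apply Lemma~\ref{lem:infsup} to it: there exists $w\in\Vht$, which I normalize so that $\|w\|_{\Vte}=1$, with $\frakA(\vht,w)\geq c_\frakA\,\|\vht\|_{\Vtr}$. This $w$ need not lie in $\Wht$, so the idea is to replace it by the admissible test function $\wht:=-\calC w\in\Wht$. Since $\vht\in\hWht$, the defining property of $\calC$ in~\eqref{eq:corrC} gives
\begin{equation*}
\frakA(\vht,\wht) = -\frakA(\vht,\calC w) = \frakA(\vht,w) \geq c_\frakA\,\|\vht\|_{\Vtr},
\end{equation*}
so the pairing against $\vht$ is preserved exactly.

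It then remains only to control the norm of the chosen test function: by the stability bound~\eqref{eq:boundC} we have $\|\wht\|_{\Vte}=\|\calC w\|_{\Vte}\leq c_\frakA^{-1}\sqrt2\,\max\{1,\bar\beta\}$. Dividing yields
\begin{equation*}
\sup_{\wht\in\Wht}\frac{\frakA(\vht,\wht)}{\|\wht\|_{\Vte}} \geq \frac{c_\frakA\,\|\vht\|_{\Vtr}}{c_\frakA^{-1}\sqrt2\,\max\{1,\bar\beta\}} = \tilde c_\frakA\,\|\vht\|_{\Vtr},
\end{equation*}
and taking the infimum over $\vht\in\hWht$ finishes the argument. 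One should note in passing that $\calC w\neq 0$, since otherwise $\frakA(\vht,w)=-\frakA(\vht,\calC w)=0$ would contradict the lower bound for $\vht\neq 0$; this guarantees the denominator does not vanish.

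I expect no serious obstacle here beyond identifying the right tool. The argument is essentially bookkeeping once one recognizes that $\calC$ is defined against exactly the fine trial space $\hWht$, so it transports the optimal test direction from $\Vht$ into $\Wht$ without degrading the bilinear-form pairing, at the cost of only the bounded norm inflation quantified by~\eqref{eq:boundC}. The main conceptual step is therefore spotting this compatibility; the resulting constant $\tilde c_\frakA=c_\frakA^2/(\sqrt2\,\max\{1,\bar\beta\})$ is then forced by the two factors $c_\frakA$ (from the pairing) and $c_\frakA^{-1}\sqrt2\,\max\{1,\bar\beta\}$ (from the norm control).
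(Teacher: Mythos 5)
Your proof is correct and follows essentially the same route as the paper: both arguments use the correction operator $\calC$ from Remark~\ref{rem:corrC}, the defining relation~\eqref{eq:corrC} (valid precisely because the trial function lies in $\hWht$), and the stability bound~\eqref{eq:boundC} to transport the test function supplied by Lemma~\ref{lem:infsup} from $\Vht$ into $\Wht$. The only difference is presentational: you exhibit the single admissible test function $-\calC w$ for a (near-)maximizing $w$, whereas the paper rewrites the whole supremum over $\Wht$ as a supremum over $\Vht$ using the surjectivity of $\calC$ — an equality your one-sided bound does not even need.
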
 

\begin{proof}
Let $v \in \hWht$. With~\eqref{eq:corrC} and~\eqref{eq:boundC} and since $\calC$ is surjective, we have that 
\begin{align*}
\sup_{w \in \Wht}\frac{\frakA(v,w)}{\|w\|_{\Vte}} = \sup_{w \in \Vht}\frac{\frakA(v,-\calC w)}{\|\calC w\|_{\Vte}} \geq \sup_{w \in \Vht}\frac{c_{\frakA}\,\frakA(v, w)}{\sqrt{2}\max\{1,\bar\beta\}\|w\|_{\Vte}} \geq \tilde c_\frakA\,\|v\|_{\Vtr},
\end{align*}
using Lemma~\ref{lem:infsup} in the last step.
\end{proof}

\section{A localized multiscale method}\label{s:locmethod}

This section is devoted to introducing a localized variant of the ideal multiscale method presented in Section~\ref{s:msmethod}. We emphasize that the non-localized method is based on auxiliary corrector problems on the entire fine space-time grid, which is not feasible in terms of computational complexity and memory. However, one observes that a corrected function $\calQ z$ decays exponentially fast away from the support of the underlying function $z$, see Section~\ref{ss:decayexample}. Without a great impact on the approximation property, it is therefore possible to restrict the fine-scale computations to local spatial patches around the corresponding node and a limited number of coarse time steps. This is referred to as localization in space and localization in time, respectively. We first discuss the details about how a correction can be computed for one coarse interval at a time, which motivates the temporal localization. Then we introduce the local patches to which we restrict the computation of the corrections for all coarse basis functions. Finally, we combine the two ideas and end this section by stating a space- and time-localized multiscale method and presenting corresponding error bounds.

We emphasize that in view of~\eqref{eq:VMMfine} and due to linearity, only so-called \emph{basis correctors} need to be computed, i.e., $\calQ\Lambda_x^i$, where $\Lambda^i_x=\varphi_x\zeta_i \in \hVHT,\,x\in \calNH,\,i \in\{1,\ldots, \NT\}$ is a coarse (nodal) space-time basis function with $\supp(\Lambda^i_x) = \Nb(x) \times [T_{i-1},T_{i+1}]$. In the following, we often abbreviate $\Lambda = \Lambda_x^i$ when referring to one particular basis function.

\subsection{Localization in time} 

Let $\Lambda \in \hVHT$ be a space-time basis function as above. In this subsection, we construct $\calQ\Lambda$ by a sequential approach. 
We divide the integral in~\eqref{eq:constraintCorProb} into local integrals over $[T_{j-1},T_{j}],\, j=i,\ldots,\NT$ and define for given $j$ the local version of $\hVht$ by 
\begin{equation*}
\hVhtj := \{v\id_{\Omega \times [T_{j-1},T_j]}\,\colon\, v \in \hVht\}.
\end{equation*} 
Further, we denote with $\xi_j \in \hVhtj,\,\xi_j(T_{j-1}) = 0$ the solution to the auxiliary problem
\begin{subequations}\label{eq:xi}
\begin{align}
\int_{T_{j-1}}^{T_j} \la\dot{\xi}_{j}, v\ra + a(\xi_{j}, v) + \la\lambda_j, \IH v\ra \dt&= -\int_{T_{j-1}}^{T_j} \la\dot{\Lambda}, v\ra + a(\Lambda,v) - \la\tfrac{1}{T}\xi_{j-1}(T_{j-1}), v\ra \nonumber\\ 
&\qquad \qquad \qquad + a(\tfrac{T_j-t}{\T} \xi_{j-1}(T_{j-1}),v) \dt, \label{eq:xi_1}\\
\la\IH\xi_{j}(T_j), \mu\ra &= 0,
\end{align}
\end{subequations}
for all $v\in \Vht$, $\mu \in \VH$, where $\lambda_j \in \VH$ is the associated Lagrange multiplier. For $j=i$, we explicitly set $\xi_{i-1}(T_{i-1}) = 0$ such that the third and the fourth term on the right-hand side of~\eqref{eq:xi_1} vanish. Note that the functions $\{\xi_j\}_{j=i}^{\NT}$ are constructed such that 
\begin{equation*}
\calQ\Lambda= \sum_{j=i}^{\NT} \big(\xi_j + \tfrac{T_j-t}{\T}\xi_{j-1}(T_{j-1})\big)\id_{[T_{j-1},T_j]}.
\end{equation*}
We emphasize that $\Lambda$ only has support on $[T_{i-1},T_{i+1}]$. That is, \mbox{$\calQ\Lambda\,\id_{[0,T_{i-1}]} \equiv 0$} and for $j\geq2+i$ the first two terms in~\eqref{eq:xi_1} (and also in~\eqref{eq:constraint1}) disappear. Consequently, $\calQ\Lambda$ will begin to decay due to the parabolic nature of the problem.

Due to the decay property of $\calQ\Lambda$, there will be an $\ell \in \mathbb{N}$ such that for $j\geq\ell+i$, the sequential functions $\xi_j$ will be of negligible size compared to the error of the ideal method. Hence, it suffices to restrict the computations to $\{\xi_j\}_{j=i}^{\ell+i-1}$. That is, we choose the temporally localized corrector function as $\calQ\Lambda\,\id_{[0,T_{\ell+i-1}]}$. From here on, we will refer to $\ell$ as the \emph{temporal localization parameter}. We remark that simply restricting the computations will make the function discontinuous in time, and thus it will no longer be a function in $\hVht$. However, this can easily be circumvented by setting
\begin{equation*}
\calQ_{\infty,\ell}\Lambda := \calQ\Lambda\,\id_{[0,T_{\ell+i-1}]} + \tfrac{T_{\ell+i}-t}{\T} \calQ\Lambda(T_{\ell+i-1})\,\id_{(T_{\ell+i-1},T_{\ell+i}]},
\end{equation*} 
i.e., we extend the solution by one time step, where it linearly goes down to zero. 

\subsection{Localization in space}

For the spatial localization of \eqref{eq:constraintCorProb}, we first introduce the element-based patches to which the support of $\calQ\Lambda$ is to be restricted. Given $\Nb(K)$ as defined in \eqref{eq:NT} for an element $K\in \calTH$, we define the patch $\Nb^k(K)$ of size $k$ as
\begin{align*}
\Nb^1(K) := \Nb(K), \qquad
\Nb^k(K) := \Nb(\Nb^{k-1}(K)), \ \text{for $k\geq 2$}.
\end{align*}
An example of how the patches spread across the grid with increasing $k$ is illustrated in Figure \ref{fig:patches}.
With these coarse element patches defined, we can as well define localized fine-scale spaces for an element $K \in \calTH$ by
\begin{align*}
\hVhtkK &:= \{w\in \hVht: \supp(w) \subseteq \Nb^k(K)\}, \\
\VhtkK &:= \{w\in \Vht: \supp(w) \subseteq \Nb^k(K)\},
\end{align*}
and analogously the localized remainder spaces $\hWhtkK$ and $\WhtkK$, as well as $\VHkK$ and $\VhkK$.
Let now $i \in \{1,\ldots,\NT\}$ and $\DKi = K \times [T_{i-1},T_{i}]$. 
For such a space-time element, we introduce the element restricted correction operator $\QKi v\in \hWht$ that solves 
\begin{align}\label{eq:defQKi}
\intT \la \tddt\QKi v, w \ra + a(\QKi v, w) \dt = -\int_{T_{i-1}}^{T_i} \la \dot{v},w \ra_K + a(v,w)_K \dt, &\quad w \in \Wht,
\end{align}
where the subscript $K$ on the right-hand side indicates that corresponding integrals are taken over the element $K\in \calTH$ instead of the entire domain ${\Omega}$. As in Section~\ref{ss:corrprob}, these problems can also be defined as constraint problems and are well-posed. Note that we retain the global correction operator by summing all local contributions, i.e.,
\begin{align*}
\calQ v = \sum_{K \in \calTH,\,1 \leq i \leq \NT} \QKi v.
\end{align*}
For $k\in \mathbb{N}$, we may restrict the correction to an element patch by defining $\QkKi v\in \hWhtkK$ as the solution to
\begin{align*}
\intT \la \tddt\QkKi v, w \ra + a(\QkKi v, w) \dt = -\int_{T_{i-1}}^{T_i} \la \dot{v},w\ra_K + a(v,w)_K \dt, &\quad w \in \WhtkK.
\end{align*}
We emphasize that solvability of this localized problem follows in analogy with the global case. Once again, we sum over all elements to obtain a corresponding global version by
\begin{align}\label{eq:locCor}
\calQ_{k,\infty} v := \sum_{K \in \calTH,\,1 \leq i \leq \NT} \QkKi v.
\end{align}
Having defined the spatially localized correction operator $\calQ_{k,\infty}$, we can replace $\calQ\Lambda$ by $\calQ_{k,\infty}\Lambda$ in the construction of the multiscale method. In the following, we refer to $k$ as the \emph{spatial localization parameter}.

\begin{figure}
\centering
\begin{tikzpicture} [scale=0.5]

\draw[fill=black!30!white] (2,2) -- (5,2) -- (7,4) -- (7,7) -- (5,7) -- (2,4) -- (2,2);

\draw[fill=black!60!white] (3,3) -- (5,3) -- (6,4) -- (6,6) -- (5,6) -- (3,4) -- (3,3);

\fill[fill=black!100!white] (4,4) -- (5,4) -- (5,5) -- (4,4);

\draw [line width=0.2mm, draw=black, fill=black!20!white] (1,1) grid  (8,8);

\foreach \i in {1,...,7}
{
	\draw[line width=0.2mm, draw=black, fill=black!20!white] (1,8-\i) -- (1+\i,8);
	\draw[line width=0.2mm, draw=black, fill=black!20!white] (8-\i,1) -- (8,1+\i);
}

\node[scale=1.2] at (4.5,8.7) {$\calTH$};

\fill[fill=black!100!white] (4+\xf,4+\yf) -- (5+\xf,4+\yf) -- (5+\xf,5+\yf) -- (4+\xf,4+\yf);

\draw[fill=black!60!white] (3+\xs,3+\ys) -- (5+\xs,3+\ys) -- (6+\xs,4+\ys) -- (6+\xs,6+\ys) -- (5+\xs,6+\ys) -- (3+\xs,4+\ys) -- (3+\xs,3+\ys);

\foreach \i in {0,1}
{
	\draw[line width=0.2mm, draw=black, fill=black!20!white] (3+\xs+\i,3+\ys) -- (6+\xs,6+\ys-\i);
	\draw[line width=0.2mm, draw=black, fill=black!20!white] (4+\xs+\i,3+\ys) -- (4+\xs+\i,5+\ys+\i);
	\draw[line width=0.2mm, draw=black, fill=black!20!white] (3+\xs+\i,4+\ys+\i) -- (6+\xs,4+\ys+\i);
}

\draw[fill=black!30!white] (2+\xt,2+\yt) -- (5+\xt,2+\yt) -- (7+\xt,4+\yt) -- (7+\xt,7+\yt) -- (5+\xt,7+\yt) -- (2+\xt,4+\yt) -- (2+\xt,2+\yt);

\foreach \i in {0,1,2}
{
	\draw[line width=0.2mm, draw=black, fill=black!20!white] (2+\xt+\i,2+\yt) -- (7+\xt,7+\yt-\i);
	\draw[line width=0.2mm, draw=black, fill=black!20!white] (3+\xt+\i,2+\yt) -- (3+\xt+\i,5+\yt+\i);
	\draw[line width=0.2mm, draw=black, fill=black!20!white] (2+\xt+\i,4+\yt+\i) -- (7+\xt,4+\yt+\i);
}
\draw[line width=0.2mm, draw=black, fill=black!20!white] (2+\xt,3+\yt) -- (6+\xt,3+\yt);
\draw[line width=0.2mm, draw=black, fill=black!20!white] (2+\xt,3+\yt) -- (6+\xt,7+\yt);
\draw[line width=0.2mm, draw=black, fill=black!20!white] (6+\xt,3+\yt) -- (6+\xt,7+\yt);

\node[scale=1.2] at (5.0+\xf,5.7+\yf) {$K$};
\node[scale=1.2] at (5.0+\xs,6.7+\ys) {$\Nb^1(K)$};
\node[scale=1.2] at (5.0+\xt,7.7+\yt) {$\Nb^2(K)$};

\end{tikzpicture} 
\caption{\small Illustration of patches around an element $K$.}
\label{fig:patches}
\end{figure}
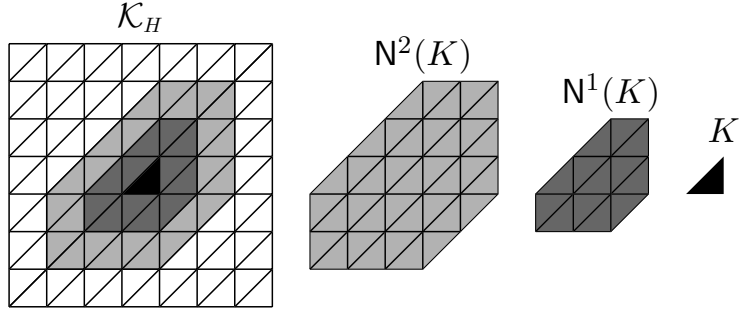
	
\subsection{Localization in space and time}\label{ss:locst}
We can now combine the spatial and temporal localization procedures from the previous subsections to create a localized variant of the space-time multiscale method introduced in Section~\ref{s:msmethod}.

Let $k\in \mathbb{N}$ and $\ell\in \mathbb{N}$ be given localization parameters in space and time, respectively. Further, we define for $j = 1,\ldots \NT$
and any $K\in \calTH$ the restricted version of $\hVhtkK$ by
\begin{equation*}
\hVhtkKj := \{v\id_{\Omega \times [T_{j-1},T_j]}\,\colon\, v \in \hVhtkK\}.
\end{equation*} 
Now let $\DKi = K \times [T_{i-1},T_{i}]$ be a fixed space-time element and $\Lambda \in \hVHT$ a nodal basis function whose support overlaps with $\DKi$.  
Further, let $\xiK \in \hVhtkKj,\,\xiK(T_{j-1}) = 0$ be the solution to
\begin{subequations}\label{eq:xiK}
\begin{align}
\int_{T_{j-1}}^{T_j} \la\tddt\xiK, v\ra + a(&\xiK, v) + \la\lambda_{j,k}^{\scalebox{0.6}{$K$},i}, \IH v\ra \dt \nonumber\\&= -\int_{T_{i-1}}^{T_i} \la\dot{\Lambda}, v\ra_K + a(\Lambda,v)_K \dt \\ 
&\qquad \qquad + \int_{T_{j-1}}^{T_j} \la\tfrac{1}{\T}\xiKp(T_{j-1}), v\ra  - a(\tfrac{T_j-t}{\T} \xiKp(T_{j-1}),v) \dt, \nonumber\\
\la\IH\xiK(T_j), \mu\ra &= 0
\end{align}
\end{subequations}
for all $v\in \VhtkK$ and $\mu \in \VH$, where $\lambda_{j,k}^{\scalebox{0.6}{$K$},i} \in \VHkK$ is the associated Lagrange multiplier. Moreover, we have the initial condition $\xi^{\scalebox{.6}{$K$},i}_{i-1,k}(T_{i-1}) = 0$. The localized space-time basis corrector $\Qkl\Lambda$ is then defined as 
\begin{equation}\label{eq:loclocCor}
\begin{aligned}
\Qkl\Lambda &= \sum_{\stackrel{K\in \calTH,\,1 \leq i \leq \NT:}{\DKi\,\cap\, \supp(\Lambda) \neq \emptyset}} \QklKi \left(\Lambda\id_{\DKi}\right)\\
&:= \sum_{\stackrel{K\in \calTH,\,1 \leq i \leq \NT:}{\DKi\,\cap\, \supp(\Lambda) \neq \emptyset}}\Big( \sum_{j=i}^{{i+\ell-1}} \big( \xiK + \tfrac{T_{j}-t}{\T}\xiKp(T_{j-1}) \big)\id_{(T_{j-1},T_j]} \\&\qquad\qquad\qquad\qquad\qquad\qquad\qquad +\tfrac{T_{{i+\ell}}-t}{\T} \xiKl(T_{i+\ell-1})\,\id_{(T_{i+\ell-1},T_{{i+\ell}}]}\Big).
\end{aligned}
\end{equation}
We emphasize that this construction also allows to define $\QklKi$, and thus $\Qkl$, for any function $\vHT \in\hVHT$ by replacing $\Lambda$ by $\vHT$ in~\eqref{eq:xiK} and~\eqref{eq:loclocCor}.

\subsection{Localized space-time multiscale method}
With the localized correction operator $\Qkl$ defined in the previous subsection, the proposed \emph{localized multiscale method} reads: find $\tuHTkl = \uHTkl + \Qkl \uHTkl \in (\Id+\Qkl)\hVHT$ such that
\begin{equation}
\intT \la\tddt(\Id+\Qkl)\uHTkl,\vHT\ra + a((\Id+\Qkl)\uHTkl,\vHT) \dt = \intT \la f, \vHT\ra \dt \label{eq:loccoarseproblem}
\end{equation}
for all $\vHT \in \VHT$.\medskip

In the following subsection, we investigate the localization procedure by means of an a posteriori error estimate and then investigate the error of the localized multiscale method.

\subsection{Well-posedness and error of the localized method}

Let $\DKi = K \times (T_{i-1},T_{i})$, $K \in \calTH,\,i = 1,\ldots,\NT$ be local space-time elements. Further, we define an error estimator for the spatial decay,
\begin{equation}\label{eq:errEstX}
\delta^{\DKi}_{k,\ell} := \sup_{\vHT \in \VHT}\frac{\|\QklKi(\vHT\id_{\DKi})\id_{\Nb^{k}(K)\setminus\Nb^{k-3}(K)}\|_{\Vtr}}{\|\vHT\id_{\DKi}\|_{\Vtr}}, 
\end{equation}
an error estimator for the temporal decay,
\begin{equation}\label{eq:errEstT}
\vartheta^{\DKi}_{k,\ell} := \big(\T^{-1/2}H + \T^{1/2}\big)\,\sup_{\vHT \in \VHT}\frac{\|\nabla\QklKi(\vHT\id_{\DKi})(T_{{i+\ell-1}})\|_{\L}}{\|\vHT\id_{\DKi}\|_{\Vtr}},
\end{equation}
as well as 
\begin{equation*}
\delta_{k,\ell} := \adjustlimits\sup_{K \in \calTH}\sup_{i = 1,\ldots,\NT} \delta^{\DKi}_{k,\ell}, \qquad \vartheta_{k,\ell} := \adjustlimits\sup_{K \in \calTH}\sup_{i = 1,\ldots,\NT} \vartheta^{\DKi}_{k,\ell}.
\end{equation*}
Since the corrections $\QklKi$ numerically show a rapid decay in space and time away from the element $\DKi$, we expect the spatial and temporal error indicator to decay as well when $k$ and $\ell$ are increased. This is due to the fact that the ring $\Nb^{k}(K)\setminus\Nb^{k-3}(K)$ is further away from the element $K$ if $k$ is increased and $T_{i+\ell-1}$ is further away from $[T_{i-1},T_i]$ if $\ell$ is increased. 

We emphasize that $\delta_{k,\ell}$ and $\vartheta_{k,\ell}$ can be computed by solving small (coarse-scale) eigenvalue problems. In particular, we can adjust the localization parameters $k$ and $\ell$ in order to obtain numbers $\delta_{k,\ell}$ and $\vartheta_{k,\ell}$ that are smaller than a certain threshold. We employ these estimators in the a posteriori bounds derived below. 

\begin{lemma}[A posteriori localization error]\label{lem:aposteriori}
Let $\calQ$ be the correction operator defined in~\eqref{eq:VMMfine} and $\Qkl$ its localized version defined by~\eqref{eq:loclocCor} with $k \geq 3$ and $\ell \geq 1$. Then 
\begin{equation*}
\begin{aligned}
\|(\calQ-&\Qkl)\vHT\|_{\Vtr}
\leq \tilde c_\frakA^{-1}\bar\beta\,\Cint(C_\eta H^{-1} + 1) \,C \big(k^{(d-1)/2}\ell^{1/2}\,\delta_{k,\ell} + k^{d/2}\,\vartheta_{k,\ell})\,\|\vHT\|_{\Vtr}.
\end{aligned}
\end{equation*}
\end{lemma}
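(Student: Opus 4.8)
The plan is to combine the inf-sup stability of the remainder spaces with a cutoff argument that converts the localization error into the boundary quantities measured by $\delta_{k,\ell}$ and $\vartheta_{k,\ell}$. Since each summand $\QklKi(\vHT\id_{\DKi})$ lies in $\hWht$, so does $(\calQ-\Qkl)\vHT$, and Lemma~\ref{lem:infsupW} provides a test function $w \in \Wht$ with $\|w\|_{\Vte} = 1$ such that $\tilde c_\frakA\,\|(\calQ-\Qkl)\vHT\|_{\Vtr} \le \frakA((\calQ-\Qkl)\vHT, w)$. I would then expand the right-hand side over the space-time elements, $\frakA((\calQ-\Qkl)\vHT, w) = \sum_{K,i}\frakA\big((\QKi-\QklKi)z, w\big)$ with $z := \vHT\id_{\DKi}$, and estimate each term separately. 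The starting point for each term is that by~\eqref{eq:defQKi} the full corrector satisfies $\frakA(\QKi z, w) = -\frakA_{\DKi}(z, w)$ for \emph{every} $w \in \Wht$, where $\frakA_{\DKi}$ is the form obtained by restricting the integrals in $\frakA$ to $\DKi$, whereas the localized corrector satisfies the same identity only against test functions supported in $\Nb^k(K)$.

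For the spatial decay, I would introduce for each $K$ a coarse, piecewise affine cutoff $\eta_K$ vanishing on $\Nb^{k-3}(K)$ and equal to one outside $\Nb^{k-1}(K)$, so that $\|\nabla\eta_K\|_{L^\infty}\le C_\eta H^{-1}$. Writing $w = \eta_K w + (1-\eta_K)w$ and reprojecting $(1-\eta_K)w$ into $\WhtkK$ with the quasi-interpolation (to repair the mean-zero constraint defining $\Wht$), the localized Galerkin identity applied to the reprojected part cancels against $-\frakA_{\DKi}(z,w)$ up to a perturbation controlled by the local stability~\eqref{eq:IHloc} of $\IH$ (this is where $\Cint$ enters). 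What remains is essentially $-\frakA(\QklKi z, \eta_K w)$; since $\eta_K$ vanishes on $\Nb^{k-3}(K)$ while $\QklKi z$ is supported in $\Nb^k(K)$, this term only sees $\QklKi z$ on the ring $\Nb^k(K)\setminus\Nb^{k-3}(K)$. Splitting $\nabla(\eta_K w) = \eta_K\nabla w + w\nabla\eta_K$ produces the factor $(C_\eta H^{-1}+1)$, the continuity of $a$ contributes $\bar\beta$, and the resulting bound is exactly $\bar\beta\,\Cint\,(C_\eta H^{-1}+1)$ times the ring norm entering $\delta^{\DKi}_{k,\ell}$.

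For the temporal decay I would exploit the sequential, parabolic structure of~\eqref{eq:xiK}: since $\Qkl$ truncates the corrector at $T_{i+\ell-1}$ and extends it linearly to zero, testing the difference $\QKi-\QklKi$ against $w$ and integrating by parts in time isolates a boundary term at $t = T_{i+\ell-1}$ governed by $\nabla\QklKi z(T_{i+\ell-1})$, whose weighted norm is precisely $\vartheta^{\DKi}_{k,\ell}$; the weight $(\T^{-1/2}H+\T^{1/2})$ in~\eqref{eq:errEstT} is chosen to match the $\Vtr$/$\Vte$ scaling of this time-slice contribution. Finally I would sum over all $(K,i)$ and invoke the finite overlap of the patches: each point of $\Omega$ belongs to $\mathcal{O}(k^d)$ patches while the ring contains $\mathcal{O}(k^{d-1})$ coarse elements. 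A Cauchy--Schwarz step in the element and time indices turns the sums into the suprema $\delta_{k,\ell}$ and $\vartheta_{k,\ell}$, with the spatial ring summed over the $\ell$ retained time steps yielding the factor $k^{(d-1)/2}\ell^{1/2}$ and the full spatial patch at the single final slice yielding $k^{d/2}$; dividing by $\tilde c_\frakA$ gives the claim.

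The main obstacle is the second paragraph: multiplying $w$ by $\eta_K$ breaks the interpolation-kernel constraint that defines $\Wht$, so the Galerkin orthogonality is not available verbatim and the cut-off function must be reconciled with the remainder space. Reprojecting with $\IH$/$\IHT$ and absorbing the resulting perturbation through the local estimates~\eqref{eq:IHloc}, while keeping the $H^{-1}$ scaling and all constants explicit and cleanly separating the spatial ($\delta$) and temporal ($\vartheta$) decay mechanisms so that neither is double counted, is the delicate part of the argument.
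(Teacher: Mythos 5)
Your proposal follows essentially the same route as the paper's proof: the inf-sup reduction via Lemma~\ref{lem:infsupW}, the element-wise splitting $\sum_{K,i}\frakA((\QKi-\QklKi)(\vHT\id_{\DKi}),w)$, the coarse cutoff $\eta$ with the $(C_\eta H^{-1}+1)$ factor, the identification of the two surviving terms (ring term for $\delta_{k,\ell}$, time-truncation term for $\vartheta_{k,\ell}$), and the overlap-counting Cauchy--Schwarz step giving $k^{(d-1)/2}\ell^{1/2}$ and $k^{d/2}$. The one refinement worth noting is that the "delicate" repair step you flag is handled in the paper by an \emph{exact} decomposition $w=(\Id-\IHT)I_h(\eta w)+(\Id-\IHT)I_h((1-\eta)w)$ (both pieces reprojected, using $I_h w = w$ and $\IHT w = 0$), together with the identity $\|(\Id-\IHT)\|_{\mathcal{L}(\Vte)}=\|\IHT\|_{\mathcal{L}(\Vte)}$, so no perturbation term ever appears.
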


\begin{proof}
Let $K \in \calTH$ and $1 \leq i \leq \NT$. Further, let $\QKi$ be the correction operator defined in~\eqref{eq:defQKi} and $\QklKi$ its space- and time-localized variant as given in~\eqref{eq:loclocCor}. Due to the definition of $\QKi$ and $\QklKi$, we have that 
\begin{equation*}
(\QKi-\QklKi)(\vHT\id_{\DKi}) \in \hWht, 
\end{equation*}
where $\DKi = K \times (T_{{i-1}},T_{{i}})$ as above. Now, let $\eta \in C^0(\Omega)$ be a cutoff function with 
\begin{equation*}
0 \leq \eta \leq 1,\quad \eta\id_{\Omega\setminus\Nb^{k-1}(K)} \equiv 1,\quad \eta\id_{\Nb^{k-2}(K)}\equiv 0,\quad \|\nabla \eta\|_{L^\infty(\Omega)} \leq C_\eta H^{-1}.
\end{equation*}
For the moment, we abbreviate $\rho := \QKi(\vHT\id_{\DKi})$, $\rho_{k,\ell} := \QklKi(\vHT\id_{\DKi})$, and $v := \vHT\id_{\DKi}$. Further, we write $R^k(K) := \Nb^k(K)\setminus\Nb^{k-3}(K)$. 
Let $I_h\colon C^0(\Omega)\to \Vh$ be the (spatial) nodal interpolation operator with respect to $\calTh$. Note that $I_h \circ I_h = I_h$ and, by standard interpolation and inverse estimates, there exists a constant $C_I$ such that
\begin{equation*}
	\|\nabla (I_h q)\|_{L^2(K)} \leq C_I\,\|\nabla q\|_{L^2(K)}
\end{equation*}
for any $K \in \calTH$ and any piecewise quadratic polynomial $q$ (with respect to the mesh~$\calTh$).	
Let now $w \in \Wht$. Observe that $I_h w = w$, $\IHT w = 0$, and thus
\begin{equation*}
	w = (\Id-\IHT)w = (\Id-\IHT)I_h w = (\Id-\IHT)I_h(\eta w) + (\Id-\IHT)I_h((1-\eta) w).
\end{equation*}
Using this equality, we compute 
\begin{align}
\frakA(\rho - \rho_{k,\ell},w) 
& = 
\frakA(\rho-\rho_{k,\ell},(\Id-\IHT)I_h(\eta w) + (\Id-\IHT)I_h((1-\eta)w))\nonumber\\
& = \underbrace{-\frakA(v,(\Id-\IHT)I_h(\eta w))}_{=\,0} + \frakA(v,(\Id-\IHT)I_h((1-\eta) w))\nonumber\\
&\qquad - \frakA(v,(\Id-\IHT)I_h((1-\eta) w) ) - \frakA(\rho_{k,\ell},(\Id-\IHT)I_h(\eta w)) \nonumber\\ &\qquad- \frakA(\rho_{k,\ell}\id_{[T_{{i+\ell-1}},T_{{i+\ell}}]},(\Id-\IHT)I_h({(1-\eta)} w) ) \label{eq:proof_loc_1}\\
& = - \frakA(\rho_{k,\ell},(\Id-\IHT)I_h(\eta w)) 
- \frakA(\rho_{k,\ell}\id_{[T_{{i+\ell-1}},T_{{i+\ell}}]},(\Id-\IHT)I_h((1-\eta) w) )\nonumber\\
& \leq \bar\beta\Big( \big\|\rho_{k,\ell}\id_{R^k(K)}\id_{[T_{{i-1}},T_{{i+\ell}}]}\big\|_{\Vtr} 
\,\big\|(\Id-\IHT)I_h(\eta w)\id_{R^k(K)}\id_{[T_{{i-1}},T_{{i+\ell}}]}\big\|_{\Vte} \nonumber\\
&\qquad + \big\|\rho_{k,\ell}
\id_{[T_{{i+\ell-1}},T_{{i+\ell}}]}\big\|_{\Vtr}\, \big\|(\Id-\IHT)I_h((1-\eta)w) \id_{\Nb^k(K)}
\id_{[T_{{i+\ell-1}},T_{{i+\ell}}]}\big\|_{\Vte} \Big).\nonumber
\end{align}
Note that the supports in space come from the fact that $\IHT$ extends the spatial support by at most one layer of elements. 

Next, we bound the terms on the right-hand side of~\eqref{eq:proof_loc_1}. 
Using the definition of $\rho_{k,\ell}$ and~\eqref{eq:IH1}, we get
\begin{equation}\label{eq:proof_loc_2}
\begin{aligned}
\|\rho_{k,\ell}\id_{[T_{i+\ell-1},T_{i+\ell}]}\|_{\Vtr} &= \|\tfrac{T_{i+\ell}-t}{\T}\rho_{k,\ell}(T_{i+\ell-1}) \,\id_{[T_{i+\ell-1},T_{i+\ell}]}\|_{\Vtr} \\
& \leq \bigg(\frac{1}{\T}\Big(\int_{T_{i+\ell-1}}^{T_{i+\ell}} \|\rho_{k,\ell}(T_{i+\ell-1})\|^2_{\L} \dt\Big)^{1/2} \\&\qquad+ \Big(\int_{T_{i+\ell-1}}^{T_{i+\ell}} \|\nabla\overline{\tfrac{T_{i+\ell}-t}{\T}\rho_{k,\ell}(T_{i+\ell-1})}\|^2_{\L} \dt\Big)^{1/2}\bigg)\\
& \leq \big(\Cint\,\T^{-1/2}H + \frac{1}{\sqrt{2}}\, \T^{1/2}\big)\|\nabla\rho_{k,\ell}(T_{i+\ell-1})\|_{\L}\\
& \leq C\big(\T^{-1/2}H + \T^{1/2} \big)\|\nabla\rho_{k,\ell}(T_{i+\ell-1})\|_{\L}.
\end{aligned}
\end{equation}
The last ingredient for the final estimate is a bound of $\|(\Id-\IHT)(\eta w)\id_D\|_{\Vte}$ for some space-time subdomain $D$. Since $\IHT\circ\IHT = \IHT$, we have that $\|(\Id-\IHT)\|_{\mathcal{L}(\Vte)} = \|\IHT\|_{\mathcal{L}(\Vte)}$; see, e.g., \cite{XuZ03}. With the product rule, the bounds on $\eta$, the Friedrichs inequality, and~\eqref{eq:stabIHT}, we therefore obtain
\begin{equation}\label{eq:proof_loc_4}
\begin{aligned}
\|(\Id-\IHT)I_h(\eta w)\id_D\|_{\Vte} &\leq \|(\Id-\IHT)\|_{\mathcal{L}(\Vte)}\,C_I \|\eta w\id_D\|_{\Vte}\\
&\leq \|\IHT\|_{\mathcal{L}(\Vte)} (C_\eta H^{-1}+1)C_I\|w\id_D\|_{\Vte}\\
&\leq \Cint(C_\eta H^{-1}+1)C_I\|w\id_D\|_{\Vte}.
\end{aligned}
\end{equation}
Going back to~\eqref{eq:proof_loc_1} and using \eqref{eq:proof_loc_2}, \eqref{eq:proof_loc_4}, as well as the error estimators defined in~\eqref{eq:errEstX} and~\eqref{eq:errEstT}, we obtain
\begin{align}
\frakA(\QKi(\vHT&\id_{\DKi})-\QklKi(\vHT\id_{\DKi}),w) \nonumber\\&\leq \bar\beta\,\Cint(C_\eta H^{-1} + 1)C_I\Big(\delta^{\DKi}_{k,\ell}\,\big\| w\id_{R^k(K)}\id_{[T_{{i-1}},T_{i+\ell}]} \big\|_{\Vte}
\label{eq:proof_loc_5}\\&\hspace{2cm}+ C\,\vartheta^{\DKi}_{k,\ell}\, \big\|w \id_{\Nb^k(K)}
\id_{[T_{i+\ell-1},T_{i+\ell}]}\big\|_{\Vte} \Big) \,\big\|\vHT\id_{\DKi}\big\|_{\Vtr}.\nonumber
\end{align}
Finally, we sum~\eqref{eq:proof_loc_5} over all $K \in \calTH$ and $i = 1,\ldots \NT$. Therefore, we note that with Lemma~\ref{lem:infsupW} there exists a function $w \in \Wht,\, \|w\|_{\Vte} = 1$ such that
\begin{align*}
\tilde c_\frakA\,\|(\calQ-\Qkl)\vHT\|_{\Vtr} &\leq \frakA((\calQ-\Qkl)\vHT,w)
\\&
= \sum_{K \in \calTH}\sum_{i = 1}^{\NT} \frakA((\QKi-\QklKi)(\vHT\id_{\DKi}),w)\\
&\leq \bar\beta\,\Cint(C_\eta H^{-1} + 1)C_I\sum_{K \in \calTH}\sum_{i = 1}^{\NT}\Big(\delta^{\DKi}_{k,\ell}\,\big\| w\id_{R^k(K)}\id_{[T_{{i-1}},T_{i+\ell}]} \big\|_{\Vte}
\\&\hspace{1.5cm}+ C\,\vartheta^{\DKi}_{k,\ell}\, \big\|w \id_{\Nb^k(K)}
\id_{[T_{i+\ell-1},T_{i+\ell}]}\big\|_{\Vte} \Big) \,\big\|\vHT\id_{\DKi}\big\|_{\Vtr}\\
& \leq \bar\beta\,\Cint(C_\eta H^{-1} + 1) \,C \big(k^{(d-1)/2}\ell^{1/2}\,\delta_{k,\ell} + k^{d/2}\,\vartheta_{k,\ell}\big)\,\|\vHT\|_{\Vtr},
\end{align*}
where we use a discrete Cauchy--Schwarz inequality and $\|w\|_{\Vte} = 1$ in the last step. 
We remark that $w$ in the first and second term on the left-hand side of~\eqref{eq:proof_loc_5} is supported on $\mathcal{O} (k^{d-1}\ell)$ and $\mathcal{O}(k^d)$ elements, respectively, which leads to the stated pre-factors due to the global overlap. 
\end{proof}

\begin{remark}[Localization error]
For fixed choices of the localization parameters $k$ and $\ell$, the error indicators $\delta_{k,\ell}$ and $\vartheta_{k,\ell}$ can be explicitly computed without much effort. For large enough localization parameters, we expect these values to be reasonably small such that the error estimate in Lemma~\ref{lem:aposteriori} is of order $\mathcal{O}(H + \T)$. This is expected by our numerical experiments, which indicate an exponential decay in both $k$ and $\ell$. 
Finally, we emphasize that decay in space is theoretically and practically observed in the elliptic setting~\cite{MalP14} and, additionally, parabolic equations naturally decay exponentially in time~\cite[Ch.~7]{KnaA03}. 
\end{remark}

Provided that the localization error estimated in Lemma~\ref{lem:aposteriori} is sufficiently small (which can be verified using the estimators $\delta_{k,\ell}$ and $\vartheta_{k,\ell}$), we can now also provide an a posteriori justification for the well-posedness of the localized multiscale method given in~\eqref{eq:loccoarseproblem}. 

\begin{lemma}[A posteriori inf-sup condition]\label{lem:apinfsup}
Let $\T \leq C H$ and assume that $k$ and $\ell$ are large enough such that  
\begin{equation}\label{eq:locbound}
\|(\calQ-\Qkl)\vHT\|_{\Vtr} \leq \frac{c_\frakA}{2 \bar\beta\, \hCint\,\Cint}H\, \|\vHT\|_{\Vtr}. 
\end{equation}
Then, the following inf-sup condition holds,
\begin{equation}\label{eq:infsupLocSpace}
\adjustlimits\inf_{\vHT \in \hVHT}\sup_{\wHT \in \VHT} \frac{\frakA((\Id+\Qkl)\vHT,\wHT)}{\|(\Id + \Qkl)\vHT\|_{\Vtr}\,\|\wHT\|_{\Vte}} \geq \hat c_\frakA
\end{equation}
with $\hat c_\frakA := c_\frakA/(2\, \hCint\Cint)$. 
In particular, problem~\eqref{eq:loccoarseproblem} is well-posed. 
\end{lemma}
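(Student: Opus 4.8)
The plan is to read \eqref{eq:infsupLocSpace} as a perturbation of an analogous inf-sup condition for the \emph{ideal} corrected space $(\Id+\calQ)\hVHT$, and to dominate the perturbation $(\calQ-\Qkl)\vHT$ using the smallness assumption \eqref{eq:locbound}. Two structural facts drive the argument. First, the defining relation \eqref{eq:VMMfine} of the correction operator says exactly that the ideally corrected functions are $\frakA$-orthogonal to the fine test space, i.e.\ $\frakA((\Id+\calQ)\vHT,\wht)=0$ for all $\wht\in\Wht$. Second, since $\Vht=\VHT\oplus\Wht$ with $\Wht=\{w\in\Vht:\IHT w=0\}$ and $\IHT\circ\IHT=\IHT$, every $w\in\Vht$ splits as $w=\IHT w+(w-\IHT w)$ with $\IHT w\in\VHT$ and $(w-\IHT w)\in\Wht$. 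I will also use that $\Qkl\vHT\in\hWht$ (guaranteed by the constraint in \eqref{eq:xiK}), together with the fact that $\hIHT$ restricts to the identity on $\hVHT$, so that $\hIHT(\Id+\Qkl)\vHT=\vHT$.

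First I would fix a nonzero $\vHT\in\hVHT$ and abbreviate $\tilde v:=(\Id+\calQ)\vHT$, $\tilde v_{k,\ell}:=(\Id+\Qkl)\vHT$, and $e:=\tilde v_{k,\ell}-\tilde v=(\Qkl-\calQ)\vHT$. By the global inf-sup condition (Lemma~\ref{lem:infsup}) there is $w\in\Vht$ with $\|w\|_{\Vte}=1$ and $\frakA(\tilde v_{k,\ell},w)\geq c_\frakA\|\tilde v_{k,\ell}\|_{\Vtr}$. Setting $\wHT:=\IHT w\in\VHT$ and using the splitting above together with the $\frakA$-orthogonality of $\tilde v$, one finds $\frakA(\tilde v_{k,\ell},w-\IHT w)=\frakA(e,w-\IHT w)$, whence
\[
\frakA(\tilde v_{k,\ell},\wHT)=\frakA(\tilde v_{k,\ell},w)-\frakA(e,w-\IHT w)\geq c_\frakA\|\tilde v_{k,\ell}\|_{\Vtr}-\bigl|\frakA(e,w-\IHT w)\bigr|.
\]
It then remains to absorb the last term into half of the leading term.

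The crucial point is the estimate of $|\frakA(e,w-\IHT w)|$. Using the continuity of $\frakA$ (as in the proof of Lemma~\ref{lem:Qwellposed}), the stability $\|w-\IHT w\|_{\Vte}\leq\Cint\|w\|_{\Vte}=\Cint$ (since $\Id-\IHT$ and $\IHT$ share the same operator norm, cf.\ \eqref{eq:proof_loc_4}), and the localization bound \eqref{eq:locbound}, this term is controlled by a constant multiple of $H\,\|\vHT\|_{\Vtr}$. Here the factor $H$ is essential: it is exactly compensated by the inverse-type estimate $\|\vHT\|_{\Vtr}=\|\hIHT\tilde v_{k,\ell}\|_{\Vtr}\leq\hCint H^{-1}\|\tilde v_{k,\ell}\|_{\Vtr}$ from \eqref{eq:stabhIHT}. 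Since the reciprocal factors $\bar\beta^{-1}$, $\hCint^{-1}$, $\Cint^{-1}$ in \eqref{eq:locbound} are precisely those produced by the continuity constant of $\frakA$ and by $\|\Id-\IHT\|_{\mathcal L(\Vte)}$, the bookkeeping yields $|\frakA(e,w-\IHT w)|\leq\tfrac12 c_\frakA\|\tilde v_{k,\ell}\|_{\Vtr}$, so that $\frakA(\tilde v_{k,\ell},\wHT)\geq\tfrac12 c_\frakA\|\tilde v_{k,\ell}\|_{\Vtr}$. Dividing by $\|\wHT\|_{\Vte}\leq\Cint$ then gives \eqref{eq:infsupLocSpace}, the residual factor $\hCint\geq1$ providing the slack that accommodates the continuity constant of $\frakA$.

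The main obstacle is exactly this constant tracking in the perturbation step: one must check that the factor $H$ in \eqref{eq:locbound} is genuinely absorbed by the $H^{-1}$ blow-up of the $\hIHT$-stability bound \eqref{eq:stabhIHT}, so that the error term scales like $\|\tilde v_{k,\ell}\|_{\Vtr}$ and not like $H^{-1}\|\tilde v_{k,\ell}\|_{\Vtr}$. This compensation is the sole reason the localization requirement \eqref{eq:locbound} is stated with the multiplicative factor $H$; without it the perturbation could not be dominated by the ideal bound. Finally, for well-posedness of \eqref{eq:loccoarseproblem} I would observe that $\Id+\Qkl$ is injective on $\hVHT$ (again because $\hIHT(\Id+\Qkl)=\Id$ there), so the trial space $(\Id+\Qkl)\hVHT$ and the test space $\VHT$ have the same finite dimension; the inf-sup condition \eqref{eq:infsupLocSpace} then forces injectivity, hence bijectivity, of the associated square linear system, which is the asserted well-posedness.
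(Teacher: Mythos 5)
Your proof is correct and follows essentially the same route as the paper's own argument: apply the global inf-sup condition of Lemma~\ref{lem:infsup} to $(\Id+\Qkl)\vHT$, take the coarse test function $\IHT w$, use the $\frakA$-orthogonality of the ideal corrector to $\Wht$ (via $\IHT\circ\IHT = \IHT$, so $w-\IHT w\in\Wht$) to turn the remainder into a term involving $(\calQ-\Qkl)\vHT$, and let the factor $H$ in \eqref{eq:locbound} cancel the $H^{-1}$ from the $\hIHT$-stability bound \eqref{eq:stabhIHT}. The only looseness is in the final constant bookkeeping (the continuity constant of $\frakA$ is $\sqrt{2}\max\{1,\bar\beta\}$ rather than $\bar\beta$, so the absorption is not exactly by $\tfrac12 c_\frakA$), but this imprecision is present in the paper's proof as well, and your explicit dimension-count argument for the concluding well-posedness claim is a detail the paper leaves implicit.
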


\begin{proof}	
Let $\vHT \in \hVHT$. 
Note that with~\eqref{eq:stabhIHT} and the definition of $\Qkl$, we have that 
\begin{equation}\label{eq:proof_is_1}
\|\vHT\|_{\Vtr} = \|\hIHT(\Id + \Qkl)\vHT\|_{\Vtr} \leq \hCint H^{-1}\,\|(\Id + \Qkl)\vHT\|_{\Vtr}.
\end{equation}
Further, by Lemma~\ref{lem:infsup}, there exists a function $w \in \Vht$ such that
\begin{equation*}
\frakA((\Id+\Qkl)\vHT,w) \geq c_\frakA\,\|(\Id+\Qkl)\vHT\|_{\Vtr}\,\|w\|_{\Vte}.
\end{equation*} 
Using this, \eqref{eq:locbound}, and~\eqref{eq:proof_is_1}, we compute
\begin{align*}
\sup_{\wHT \in \VHT}\frac{\frakA((\Id+\Qkl)\vHT,\wHT)}{\|(\Id+\Qkl)\vHT\|_{\Vtr}\,\|\wHT\|_{\Vte}} &\geq \frac{\frakA((\Id+\Qkl)\vHT, \IHT w)}{\|(\Id+\Qkl)\vHT\|_{\Vtr}\,\|\IHT w\|_{\Vte}} \\
&\geq \frac{\frakA((\Id+\Qkl)\vHT, w)}{\Cint\,\|(\Id+\Qkl)\vHT\|_{\Vtr}\,\| w\|_{\Vte}} \\&\qquad\qquad-  \frac{|\frakA((\calQ-\Qkl)\vHT,(\Id - \IHT) w)|}{\Cint\,\|(\Id+\Qkl)\vHT\|_{\Vtr}\,\| w\|_{\Vte}}\\
&\geq \frac{c_\frakA}{ \Cint} - \frac{c_\frakA}{2\, \Cint}	= \frac{c_\frakA}{2\, \Cint}. \qedhere 
\end{align*}
\end{proof}
With Lemma~\ref{lem:aposteriori} and Lemma~\ref{lem:apinfsup}, we can now quantify the error of the proposed localized multiscale method.

\begin{theorem}[Error of the practical method] Suppose that the assumptions of Lemma~\ref{lem:apinfsup} hold. Further, let $\uht \in \hVht$ be the solution to~\eqref{eq:discretizedsol}, $\uHT$ the coarse part of the solution to~\eqref{eq:VMM}, and $\tuHTkl = (\Id + \Qkl)\uHTkl$ the solution to~\eqref{eq:loccoarseproblem}. Then
\begin{equation}\label{eq:finalerr}
\begin{aligned}
\| \uht - \tuHTkl \|_{\Vtr} \lesssim (H + \T)\,\|f&\|_{L^2(L^2) \cap H^1(H^{-1})} \\&
+ H^{-1}\big(k^{(d-1)/2}\ell^{1/2}\delta_{k,\ell} + k^{d/2} \vartheta_{k,\ell}\big)\,\|\uHT\|_{\Vtr}.
\end{aligned}
\end{equation}
That is, if $k$ and $\ell$ are chosen large enough and with the decay of the error estimators, we retain a convergence rate of order $\mathcal{O}(H+\T)$.
\end{theorem}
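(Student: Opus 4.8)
The plan is to bound the error by the triangle inequality, splitting it into the error of the ideal method and a purely localization-related error,
\begin{equation*}
\|\uht - \tuHTkl\|_{\Vtr} \leq \|\uht - \tuHT\|_{\Vtr} + \|\tuHT - \tuHTkl\|_{\Vtr},
\end{equation*}
where $\tuHT = (\Id+\calQ)\uHT$ is the solution of the ideal method~\eqref{eq:VMM}. The first term is controlled directly by Theorem~\ref{t:errIdeal}, contributing $C(H+\T)\,\|f\|_{L^2(L^2)\cap H^1(H^{-1})}$. Everything else reduces to estimating the second term.

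The main obstacle is that $\tuHT$ and $\tuHTkl$ belong to the \emph{different} corrected coarse spaces $(\Id+\calQ)\hVHT$ and $(\Id+\Qkl)\hVHT$, so the difference cannot be tested naively. My remedy is to interpose the function $(\Id+\Qkl)\uHT$, which lies in the same space as $\tuHTkl$, and to decompose
\begin{equation*}
\tuHT - \tuHTkl = (\calQ - \Qkl)\uHT + (\Id+\Qkl)(\uHT - \uHTkl).
\end{equation*}
The first summand is handled immediately by the a posteriori localization estimate of Lemma~\ref{lem:aposteriori} applied to $\vHT = \uHT$. Absorbing the factor $(C_\eta H^{-1}+1)$ into $H^{-1}$ (valid for $H<1$) produces exactly the stated term $H^{-1}\big(k^{(d-1)/2}\ell^{1/2}\delta_{k,\ell} + k^{d/2}\vartheta_{k,\ell}\big)\,\|\uHT\|_{\Vtr}$.

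For the second summand I would invoke the a posteriori inf-sup condition of Lemma~\ref{lem:apinfsup}. Since $(\Id+\Qkl)(\uHT-\uHTkl)$ is a corrected coarse function, that inf-sup bound yields
\begin{equation*}
\hat c_\frakA\,\|(\Id+\Qkl)(\uHT-\uHTkl)\|_{\Vtr} \leq \sup_{\wHT \in \VHT}\frac{\frakA((\Id+\Qkl)(\uHT-\uHTkl),\wHT)}{\|\wHT\|_{\Vte}}.
\end{equation*}
The key cancellation is a Galerkin-type orthogonality: both the ideal method~\eqref{eq:VMMcoarse} and the localized method~\eqref{eq:loccoarseproblem} are tested against $\VHT$ with the \emph{same} right-hand side $\intT \la f,\wHT\ra \dt$. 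Replacing $\frakA((\Id+\Qkl)\uHTkl,\wHT)$ by this right-hand side via~\eqref{eq:loccoarseproblem} and then $\intT\la f,\wHT\ra\dt$ by $\frakA((\Id+\calQ)\uHT,\wHT)$ via~\eqref{eq:VMMcoarse}, the numerator collapses to $\frakA((\Qkl-\calQ)\uHT,\wHT)$. The continuity estimate for $\frakA$ established in the proof of Lemma~\ref{lem:Qwellposed} (with constant $\sqrt{2}\max\{1,\bar\beta\}$), together with another application of Lemma~\ref{lem:aposteriori}, then gives $\|(\Id+\Qkl)(\uHT-\uHTkl)\|_{\Vtr} \lesssim \|(\calQ-\Qkl)\uHT\|_{\Vtr}$, i.e.\ yet another multiple of the same localization quantity.

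Collecting the three contributions yields~\eqref{eq:finalerr}. The only genuine subtlety is the comparison across the two distinct corrected spaces; once the intermediate function $(\Id+\Qkl)\uHT$ is inserted and the shared coarse right-hand side is exploited, every remaining estimate is a direct application of Lemma~\ref{lem:aposteriori}, Lemma~\ref{lem:apinfsup}, and Theorem~\ref{t:errIdeal}.
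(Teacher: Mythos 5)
Your proof is correct, and it reaches the result by a recognizably different organization than the paper's. The paper defines a Ritz-type projection $\Rht\colon \hVht \to (\Id+\Qkl)\hVHT$ via $\frakA(\Rht\vht,\wHT)=\frakA(\vht,\wHT)$ for all $\wHT\in\VHT$, observes that $\Rht\uht=\tuHTkl$ and that $(\Id+\calQ_{k,\ell})\uHT$ is fixed by $\Rht$, invokes the idempotency identity $\|\Id-\Rht\|_{\mathcal{L}(\hVht)}=\|\Rht\|_{\mathcal{L}(\hVht)}$ of Xu--Zikatanov, and bounds $\|\Rht\|$ by combining Lemma~\ref{lem:apinfsup} with the continuity of $\frakA$; the error then factors as $\|\Rht\|\,\|\uht-(\Id+\Qkl)\uHT\|_{\Vtr}$, and the last norm is split by the triangle inequality into the ideal error (Theorem~\ref{t:errIdeal}) and the localization error (Lemma~\ref{lem:aposteriori}). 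You instead bypass the projection entirely: after the initial triangle inequality you insert the intermediate function $(\Id+\Qkl)\uHT$, apply Lemma~\ref{lem:apinfsup} directly to the coarse difference $\uHT-\uHTkl\in\hVHT$, and use the shared right-hand side of~\eqref{eq:VMMcoarse} and~\eqref{eq:loccoarseproblem} to collapse the numerator to $\frakA((\Qkl-\calQ)\uHT,\wHT)$, which continuity and Lemma~\ref{lem:aposteriori} control. The ingredients (the three key results and the Galerkin-type orthogonality) are identical, so the difference is structural rather than conceptual: the paper's projection formulation is more compact and immediately reusable for other quasi-optimality statements, while your version avoids the operator-norm identity, makes the orthogonality cancellation explicit, and even distributes the constants slightly more favorably (the stability factor $\hat c_\frakA^{-1}\sqrt{2}\max\{1,\bar\beta\}$ multiplies only the localization term, not the ideal error). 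One point worth stating explicitly in a polished write-up: the continuity bound $\frakA(z,v)\leq\sqrt{2}\max\{1,\bar\beta\}\|z\|_{\Vtr}\|v\|_{\Vte}$ from the proof of Lemma~\ref{lem:Qwellposed} is valid because the test function $\wHT\in\VHT$ is piecewise constant in time on the fine intervals (so the spatial term only sees the temporal mean $\overline{(\cdot)}$), and that $\Qkl$ is linear, so that $(\Id+\Qkl)\uHT-(\Id+\Qkl)\uHTkl=(\Id+\Qkl)(\uHT-\uHTkl)$; both facts hold but are used silently in your argument.
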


\begin{proof}
Let $\Rht\colon \hVht \to (\Id+\calQ_{k,\ell})\hVHT$ be the Ritz projection defined for $\vht \in \hVht$ by
\begin{equation}\label{eq:proof_error_1}
\frakA(\Rht\vht,\wHT) = \frakA(\vht,\wHT)
\end{equation}
for all $\wHT \in \VHT$.
Since $\Rht \circ \Rht = \Rht$, we have $\|\Id-\Rht\|_{\mathcal{L}(\hVht)} = \|\Rht\|_{\mathcal{L}(\hVht)}$; see, e.g.~\cite{XuZ03}.
Using this, we compute
\begin{equation}\label{eq:proof_error_2}
\begin{aligned}
\| \uht - (\Id+\Qkl)\uHTkl \|_{\Vtr} & = \| (\Id - \Rht) \uht \|_{\Vtr} \\
& = \| (\Id - \Rht) (\uht - (\Id+\Qkl)\uHT) \|_{\Vtr}\\
& \leq \|\Rht\|_{\mathcal{L}(\hVht)} \, \|\uht - (\Id+\Qkl)\uHT \|_{\Vtr},
\end{aligned}
\end{equation}
where $\uHT \in \hVHT$ is the coarse-scale part of the solution to the ideal method~\eqref{eq:VMM}. 
With the inf-sup condition~\eqref{eq:infsupLocSpace} and \eqref{eq:proof_error_1}, we obtain
\begin{equation}\label{eq:proof_error_3}
\begin{aligned}
\|\Rht\|_{\mathcal{L}(\hVht)} &= \sup_{\vht \in \hVht} \frac{\|\Rht \vht\|_{\Vtr}}{\|\vht\|_{\Vtr}} \\
&\leq \hat c_\frakA^{-1} \adjustlimits\sup_{\vht \in \hVht}\sup_{\wHT \in \VHT} \frac{\frakA(\Rht \vht, \wHT)}{\|\vht\|_{\Vtr}\,\|\wHT\|_{\Vte}} \\
&= \hat c_\frakA^{-1} \adjustlimits\sup_{\vht \in \hVht}\sup_{\wHT \in \VHT} \frac{\frakA( \vht, \wHT)}{\|\vht\|_{\Vtr}\,\|\wHT\|_{\Vte}} \\
&\leq \hat c_\frakA^{-1} \sqrt{2}\max\{1,\bar\beta\}.
\end{aligned}
\end{equation}
Using Theorem~\ref{t:errIdeal} and Lemma~\ref{lem:aposteriori}, we further get
\begin{equation}\label{eq:proof_error_4}
\begin{aligned}
\|\uht - (\Id+\Qkl)\uHT \|_{\Vtr} &\leq \|\uht - (\Id+\calQ)\uHT \|_{\Vtr} + \|(\calQ - \Qkl)\uHT \|_{\Vtr}\\
&\lesssim (H + \T)\,\|f\|_{L^2(L^2) \cap H^1(H^{-1})} \\&\qquad+ H^{-1}\big(k^{(d-1)/2}\ell^{1/2}\delta_{k,\ell} + k^{d/2} \vartheta_{k,\ell}\big)\,\|\uHT\|_{\Vtr}. 
\end{aligned}
\end{equation}
The combination of~\eqref{eq:proof_error_2}--\eqref{eq:proof_error_4} completes the proof.
\end{proof}

\begin{remark}
Note that the norm $\|\uHT\|_{\Vtr}$ on the right-hand side of~\eqref{eq:finalerr} may be further bounded using $\uHT = \hIHT\calQ \uHT$, \eqref{eq:stabhIHT}, and Lemma~\ref{lem:infsup}, i.e.,
\begin{equation*}
\|\uHT\|_{\Vtr} \leq \hCint H^{-1} \|(\Id+\calQ)\uHT\|_{\Vtr} \leq c^{-1}_\frakA\Cint\hCint H^{-1}\, \|f\|_{L^2(H^{-1})}.
\end{equation*}
\end{remark}

\section{Implementation}
\label{s:implementation}
This section is devoted to implementation aspects of the proposed localized multiscale method as introduced in~\eqref{eq:loccoarseproblem} including some details on the solution of the localized corrector problems. 
In this section, we denote the coarse time discretization as before by $0=: T_0 < T_1 < \cdot \cdot \cdot < T_{\NT} := \frakT$, and on each coarse temporal interval $[T_{j-1},T_j]$ we introduce an internal finer discretization $t_i := T_{j-1} + i\,\tau,\,0 \leq i \leq \Nt$ with $t_{\Nt} := T_j$, with uniform fine time step $\tau$.

First, we show how the localized sequential functions from problem~\eqref{eq:xiK} are solved. We restrict the detailed explanation to the first coarse temporal interval and then comment on the computations of the remaining intervals. Let $K\in \calTH$ be a coarse element and $\Lambda\in\hVHT$ a basis function which does not vanish on $K \times (0,T_1)$. On the first coarse interval, we seek $\xiKf \in \hVhtkK$ with $\xiKf(0) = 0$ such that 
\begin{subequations}\label{eq:constraintProbOnFirstPatch}
\begin{align}
\int_0^{T_1} \la \tddt\xiKf,v \ra + a(\xiKf,v) + \la \lamKkf, \IH v \ra \dt &= -\int_0^{T_1} \la \dot\Lambda,v \ra_K + a(\Lambda,v)_K \dt, \label{eq:constraintCorProb1a} \\
\la\IH\xiKf(T_1),\mu\ra &= 0  \label{eq:constraintCorProb1b}
\end{align}
\end{subequations}
for all $v \in \VhtkK,\, \mu \in \VH$, where $\lamKkf \in \VHkK$ is the associated Lagrange multiplier. Note that the computations are performed on the spatial patch $\Nb^k(K)$.
	
For illustrative purposes, we only consider one element $K \in \calTH$ in the following and a fixed spatial localization parameter $k \in \N$.
Further, we emphasize that the choice of the nodal interpolation operator in time allows for sequential computations from one coarse time interval to another. In particular, we only require the value of $\xiKf$ at $T_1$ to compute the solution on $[T_1,T_2]$ etc.

Note that throughout this section, we use the same notation for discrete functions and corresponding vectors, and abbreviate $\xi = \xiKf$ and $\lambda = \lamKkf$. 
Let $\Mhk$ and $\MHk$ be the localized mass matrices corresponding to the patch $\Nb^k(K)$ and the spaces $\VhkK$ and $\VHkK$, respectively. Besides, $\IHk$ denotes the matrix representation of $\IH\id_{\Nb^k(K)}$ and $\{\Shki\}_{i=1}^{\Nt}$ are the localized stiffness matrices with the coefficient evaluated at times $t_{i-1/2}$, $i=1,\ldots,\Nt$.

The scheme~\eqref{eq:constraintCorProb1a} now reduces to seeking vectors $\lambda$ and $\{\xi^{i}\}_{i=1}^{\Nt}$ with $\xi^{0} = 0$ such that
\begin{align}\label{eq:CNmatrix}
(\Mhk + \tfrac{\tau}{2}\Shki)\,\xi^{i} = (\Mhk - \tfrac{\tau}{2}&\Shkim)\,\xi^{i-1} - \tau \IHk^\mathrm{T}\MHk\,\lambda + \mathcal{F}^i
\end{align}
for $i=1,2,\ldots, \Nt$, where $\mathcal{F}^i = \MhkK(\Lambda(t_i) - \Lambda(t_{i-1})) + \frac{\tau}{2}\,(\ShkiK\Lambda(t_i) + \ShkimK\Lambda(t_{i-1}))$, where $\MhkK$, $\ShkiK$, and $\ShkimK$ are the mass matrix and stiffness matrices corresponding to $\VhkK$ localized to the element $K$. The scheme can be rephrased to the matrix system 
\begin{align}
\begin{bmatrix}
A & B
\end{bmatrix}
\begin{bmatrix}
\xi \\ \lambda
\end{bmatrix} = \mathcal{F}, \label{eq:matrixsystembutnotcompleteformyet}
\end{align}
with $\xi = [
(\xi^{1})^\mathrm{T}, \ldots, (\xi^{\Nt})^\mathrm{T} ]^\mathrm{T}$, $B=[\IHk^\mathrm{T}\MHk,\ldots,\IHk^\mathrm{T}\MHk]^\mathrm{T}$, $\mathcal{F} = [(\mathcal{F}^1)^\mathrm{T},\ldots,(\mathcal{F}^{\Nt})^\mathrm{T}]^\mathrm{T}$, and the block matrix $A$ describes the Crank--Nicolson scheme with entries
\begin{align*}
A_{ij} = \begin{cases}
\Mhk + \tfrac{\tau}{2}\Shki, \ &\text{if $i=j$}, \\
-\Mhk + \tfrac{\tau}{2}\Shkim, \ &\text{if $i = j+1$}.
\end{cases}
\end{align*}
Note that the entries in the matrices and vectors defined above are themselves matrices and vectors and hence we have, e.g., $A\in \mathbb{R}^{\Nh \cdot \Nt \times \Nh \cdot \Nt}$, where $\Nh$ denotes the number of fine degrees of freedom on the patch $\Nb^k(K)$.

Since also condition~\eqref{eq:constraintCorProb1b} needs to be satisfied,  the system~\eqref{eq:matrixsystembutnotcompleteformyet} changes to
\begin{align}
\begin{bmatrix}
A & B \\
C & 0
\end{bmatrix}
\begin{bmatrix}
\xi \\ \lambda
\end{bmatrix} = \begin{bmatrix}
\mathcal{F} \\ 0
\end{bmatrix}, \label{eq:fullmatrixsystem}
\end{align}
where the matrix $C = [C_1,\ldots,C_{\Nt}]$ is such that $C_i = 0$ for $i=1,2,\ldots,\Nt-1$ and $C_{\Nt} = \IHk$. From now on, a row where each element itself is a matrix will be referred to as a \emph{block-row}. For instance, the matrix $C$ can be called a block-row. 

To solve~\eqref{eq:constraintProbOnFirstPatch}, we need to solve the matrix system~\eqref{eq:fullmatrixsystem}. The reason why we need to solve a linear system of block matrices is that we have a constraint at time $T_1$ in addition to the initial condition. If solved naively, this would mean that we need to solve a huge system if $\tau$ is small. However, this can be avoided by the procedure outlined below. In particular, 
we employ the Schur complement method (see, e.g., \cite[Sect.~1.1]{Zha05}) to this block matrix. 

Multiplying the first equation in~\eqref{eq:fullmatrixsystem} from the left side by $-CA^{-1}$, and using the condition $C\xi = 0$ from the second equation, we arrive at
\begin{align*}
CA^{-1}B\,\lambda = CA^{-1}\mathcal{F}.
\end{align*}
At this point, recall that $C_i=0$ for $i=1,\ldots,\Nt-1$ and only $C_{\Nt}=\IHk \neq 0$. Hence, it suffices to compute the last block-row of $A^{-1}B$ and apply the localized interpolant to it. This is equivalent to computing the solution to the system $AX = B$ and extracting the last block-row from $X$. Further, we note that $A$ solely describes the Crank--Nicolson scheme and we can therefore obtain $X$ sequentially, similar to~\eqref{eq:CNmatrix}. With $X$ computed, it holds that $CA^{-1}B = \IHk X_{\Nt}$. Likewise, we can for the right-hand side solve $AY = \mathcal{F}$ sequentially, and get $CA^{-1} = \IHk Y_{\Nt}$. We consequently find $\lambda$ as the solution to $\IHk X_{\Nt}\lambda= \IHk Y_{\Nt}$. With $\lambda$ computed, $\xi$ can be obtained as the solution to $A\xi= \mathcal{F} - B\lambda$, which can once again be solved sequentially with the Crank--Nicolson scheme. This gives the first contribution $\xiKf = \xi$. Note that in order to obtain $\xi$, $\NH + 2$ parabolic problems need to be solved in the interval $[0,T_1]$, where $\NH$ refers to the number of coarse degrees of freedom in the patch $\Nb^k(K)$. We emphasize that parallelization is possible.

This procedure is repeated on the intervals $[T_{j-1},T_j]$ for $j=2,\ldots,\ell$ with similar computations, but with different right-hand sides; cf.~\eqref{eq:xiK}. With the correctors for each coarse interval computed, we can construct the full localized corrector as described in~\eqref{eq:loclocCor}.
We summarize the main steps for the computation of the operator $\Qkl$ in Algorithm~\ref{alg:mainalgo}. We emphasize here that the for-loops over $K \in \calTH$ and $i = 1,\ldots,\NT$ can be computed in a parallel manner, since the corresponding correctors are completely independent of each other. This means that the proposed method is fully parallel in both space and time except for the (cheap) final coarse-scale computations.
	
\begin{algorithm}
Choose localization parameters $k,\ell$\;
\For{$K\in \calTH$}{
	Compute local matrices $\IHk$, $\MHk$, $\Mhk$ and $\{\Shki\}_{i=1}^{\Nt}$\;
	\For{$i=1,2,\ldots,\NT$}{
		\For{ 
				basis functions $\Lambda \in \hVHT$ s.t.~$\supp(\Lambda)\cap \DKi \neq \emptyset$}{
			\For{$j=i,i+1,\ldots,\min\{{i+\ell-1},\NT\}$}{
				Construct $A$, $B$, and $\mathcal{F}$ from \eqref{eq:matrixsystembutnotcompleteformyet}\;
				Solve $AX = B$ sequentially\;
				Solve $AY = \mathcal{F}$ sequentially\;
				Solve $\IHk X_{\Nt}\lambda= \IHk Y_{\Nt}$\;
				Solve $A\xi= \mathcal{F} - B\lambda$ sequentially\;
				Set $\xiK = \xi$\;
			}
		}
	}
}
Construct the operator $\Qkl$ by \eqref{eq:loclocCor}\;
\caption{Computation of the correction operator $\Qkl$.}
\label{alg:mainalgo}
\end{algorithm}
	
The final step to compute the solution to~\eqref{eq:loccoarseproblem} consists in assembling the corresponding coarse matrices with the coarse trial space $(\Id+\Qkl)\hVHT$ and the test space $\VHT$. Once computed, \eqref{eq:loccoarseproblem} can be solved as a sequential scheme that involves information on the $\ell+1$ previous coarse approximations (due to the temporal support of the correctors). This is an extremely fast scheme and can be used for multiple right-hand sides (cf.~Section~\ref{ss:rhs}) without recomputing correctors. 
	
\section{Numerical Examples}
\label{s:numericalexamples}
In this section, we present numerical examples that illustrate the performance of the proposed localized space-time multiscale method. For all our examples, we consider the domain \mbox{${\Omega} = [0,1]\times [0,1]$}. The values of the (scalar) coefficients $A$ used in the examples are generated randomly within the interval $[0.01, 0.1]$ and are piecewise constant on an underlying mesh on the scale $\varepsilon_x$ in space and $\varepsilon_t$ in time. For computational convenience, the diffusion is also periodic in time and the period length coincides with the time step $\T$. For all our numerical experiments, we set $h=\tau=2^{-7}$, $\varepsilon_x = \varepsilon_t = 2^{-5}$, and $T = 1.25$.

First, we provide an example that illustrates how the localization error and the corresponding error indicators decay exponentially in both spatial and temporal sense, which justifies the above localization procedure. We then investigate the performance of our localized multiscale method with an example that shows the convergence behavior of first order with respect to the coarse mesh size and the coarse time step. Last, we show an example where the method is used repeatedly for several different right-hand sides by computing the coarse matrices in~\eqref{eq:loccoarseproblem} once and reusing them effectively.

\subsection{Localization error and decay of basis correctors}
\label{ss:decayexample}
	
The first example illustrates how a basis corrector is affected by the spatial localization parameter $k$ and temporal localization parameter $\ell$, respectively. Here, we choose the coarse basis function $\Lambda \in \hVHT$ associated to the node $x=(0.5,0.5)$ and the time point $t=\T$, compute the corresponding non-localized basis corrector $\calQ\Lambda$, and compare it with either the corrector $\calQ_{k,\infty}\Lambda$ that is only localized in space or $\calQ_{\infty,\ell}\Lambda$ with localization solely in time. 

For this example, we use $H=\T = 2^{-3}$, and let $k$ and $\ell$ vary between $1$ and $8$. The errors are measured in the (relative) trial norm $\|\cdot\|_{\Vtr}$ and are plotted in Figure~\ref{fig:basisdecayerror} with respect to $k$ and $\ell$, respectively. Figure~\ref{fig:spacedecay} shows the exponential decay of the localization error $(\calQ-\calQ_{k,\infty})\Lambda$ with increasing $k$ and in Figure~\ref{fig:timedecay} the exponential decay of the error $(\calQ-\calQ_{\infty,\ell})\Lambda$ can be observed. We also present the decay of the error indicators $\delta_{k,\infty}$ and $\vartheta_{\infty,\ell}$, which show the same decay rates as the spatial and the temporal localization errors, respectively. This justifies the use of these indicators in order to determine whether localization parameters need to be adjusted. 
Since constants are neglected, some tuning will be needed if the indicators should be used as absolute bounds of the error.

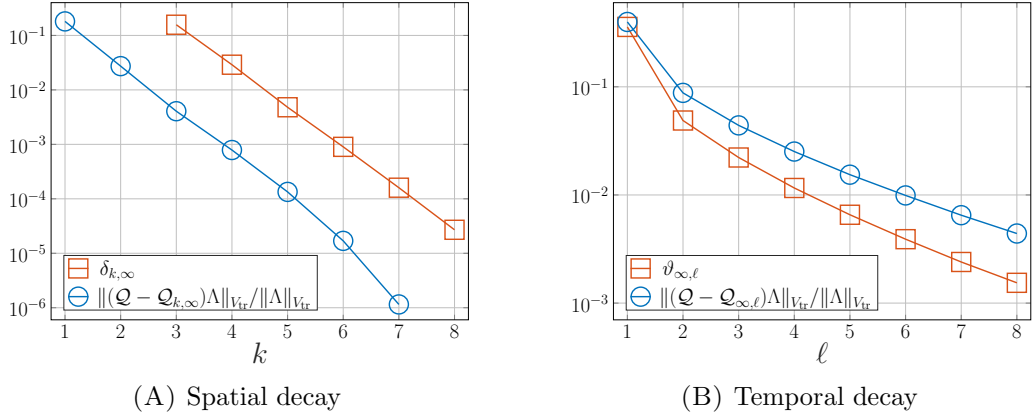
\begin{figure}
	\centering
	\begin{subfigure}[b]{0.49\textwidth}
		\definecolor{mycolor1}{rgb}{0.00000,0.44700,0.74100}
		\definecolor{mycolor2}{rgb}{0.85000,0.32500,0.0980}
		\definecolor{mycolor3}{rgb}{0.9290,0.6940,0.1250}
		\centering
		\begin{tikzpicture}[scale=0.45]
			\begin{axis}[%
				width=4.854in,
				height=3.692in,
				at={(0.814in,0.506in)},
				scale only axis,
				xmin=.75,
				xmax=8.25,
				xtick={1,2,3,4,5,6,7,8},
				xticklabels={\LARGE{$1$},\LARGE{$2$},\LARGE{$3$},\LARGE{$4$},\LARGE{$5$},\LARGE{$6$},\LARGE{$7$},\LARGE{$8$}},
				xlabel style={font=\color{white!15!black}},
				xlabel={\Huge $k$},
				ymode=log,
				ymin = 0.0000006,
				ymax=0.4,
				yminorticks=true,
				ytick={1,0.1,0.01,0.001,0.0001,0.00001,0.000001},
				yticklabels={ \LARGE{$\text{10}^{\text{0}}$},\LARGE{$\text{10}^{-\text{1}}$},\LARGE{$\text{10}^{-\text{2}}$},\LARGE{$\text{10}^{-\text{3}}$},\LARGE{$\text{10}^{-\text{4}}$},\LARGE{$\text{10}^{-\text{5}}$},\LARGE{$\text{10}^{-\text{6}}$}},
				axis background/.style={fill=white},
				xmajorgrids,
				ymajorgrids,
				legend style={legend cell align=left, align=left, draw=white!15!black},
				legend style={row sep=0.2cm},
				legend pos={south west}
				]
				\addplot [color=mycolor2, mark=square, mark options={solid, mycolor2},  very thick,mark size = 8]
				table[row sep=crcr]{%
					3	0.1572\\
					4	0.029\\
					5	0.0048\\
					6	0.0009\\
					7	0.00016\\
					8	0.000027\\
				};
				\addlegendentry{\LARGE \;\,$\delta_{k,\infty}$}		
				\addplot [color=mycolor1, mark=o, mark options={solid, mycolor1},  very thick,mark size = 8]
				table[row sep=crcr]{%
					1	0.181\\
					2	0.0273\\
					3	0.00406\\
					4	0.000787\\
					5	0.000134\\
					6	0.0000169\\
					7	0.00000115\\
				};
				\addlegendentry{\LARGE \;$\|(\calQ - \calQ_{k,\infty})\Lambda\|_{\Vtr}/\|\Lambda\|_{\Vtr}$}
			\end{axis}
		\end{tikzpicture}
		\caption{\small Spatial decay}
		\label{fig:spacedecay}
	\end{subfigure}
	~
	\begin{subfigure}[b]{0.49\textwidth}
		\centering
		\definecolor{mycolor1}{rgb}{0.00000,0.44700,0.74100}
		\definecolor{mycolor2}{rgb}{0.85000,0.32500,0.0980}
		\begin{tikzpicture}[scale=0.45]
			
			\begin{axis}[%
				width=4.854in,
				height=3.692in,
				at={(0.814in,0.506in)},
				scale only axis,
				xmin=.75,
				xmax=8.25,
				xtick={1,2,3,4,5,6,7,8},
				xticklabels={\LARGE{$1$},\LARGE{$2$},\LARGE{$3$},\LARGE{$4$},\LARGE{$5$},\LARGE{$6$},\LARGE{$7$},\LARGE{$8$}},
				xlabel style={font=\color{white!15!black}},
				xlabel={\Huge $\ell$},
				ymode=log,
				ymin=0.0007,
				ymax=.6,
				yminorticks=true,
				ytick={1,0.1,0.01,0.001,0.0001,0.00001,0.000001},
				yticklabels={ \LARGE{$\text{10}^{\text{0}}$},\LARGE{$\text{10}^{-\text{1}}$},\LARGE{$\text{10}^{-\text{2}}$},\LARGE{$\text{10}^{-\text{3}}$},\LARGE{$\text{10}^{-\text{4}}$},\LARGE{$\text{10}^{-\text{5}}$},\LARGE{$\text{10}^{-\text{6}}$}},
				axis background/.style={fill=white},
				xmajorgrids,
				ymajorgrids,
				legend style={legend cell align=left, align=left, draw=white!15!black},
				legend style={row sep=0.2cm},
				legend pos={south west}
				]
				\addplot [color=mycolor2, mark=square, mark options={solid, mycolor2}, very thick,mark size = 8]
				table[row sep=crcr]{%
					1	0.358\\
					2	0.0488\\
					3	0.0222\\
					4	0.0116\\
					5	0.00656\\
					6	0.00390\\
					7	0.00240\\
					8	0.00154\\
				};
				\addlegendentry{\LARGE \;\,$\vartheta_{\infty,\ell}$}
				\addplot [color=mycolor1, mark=o, mark options={solid, mycolor1}, very thick,mark size = 8]
				table[row sep=crcr]{%
					1	0.398\\
					2	0.0879\\
					3	0.0440\\
					4	0.0252\\
					5	0.0154\\
					6	0.0099\\
					7	0.0065\\
					8	0.0044\\
				};
				\addlegendentry{\LARGE \;$\|(\calQ - \calQ_{\infty,\ell})\Lambda\|_{\Vtr}/\|\Lambda\|_{\Vtr}$}
			\end{axis}
		\end{tikzpicture}
		\caption{\small Temporal decay}
		\label{fig:timedecay}
	\end{subfigure}
	\caption{\small Relative localization errors for a fixed basis function $\Lambda$ and values of the error indicators with respect to varying parameters $k$ or $\ell$.
	}\label{fig:basisdecayerror}
\end{figure}

Apart from the error curves, we also present in Figure~\ref{fig:basisdecay} an illustration of the basis corrector $\calQ\Lambda$ at the time points $T_j + \tau,\,j = 0,\ldots,3$. These time points are the first fine time steps within the first four coarse temporal intervals, respectively. Here, we clearly see a quick temporal decay after the first two intervals (on which the function $\Lambda$ is supported) and the spatial decay of the corrector as well.	

\subsection{Full method}
	
The second example illustrates how the proposed localized multiscale method from Section~\ref{s:locmethod} converges with first order in space and time. For this example, the coarse parameters $H = \T$ vary within $\{2^{-2}, 2^{-3}, \ldots, 2^{-6}\}$. We further set $k=|\log_2(H)|$ for the localization in space and $\ell=4$ for the localization in time. 

We compute the solution $\tuHTkl$ to~\eqref{eq:loccoarseproblem} with $f \equiv 1$ and compare it to a reference solution $\uht$, 
computed with~\eqref{eq:discretizedsol}, which resolves the oscillation scales. We measure the error in the relative trial norm, i.e, $\|\tuHTkl - \uht\|_{\Vtr}/\|\uht\|_{\Vtr}$, and plot it together with a reference line $\mathcal{O}(H)$ in Figure~\ref{fig:error_full_method} (left). One observes that the convergence rate of Theorem~\ref{t:errIdeal} is maintained even with the applied localization. For a comparison, we also present the errors in the relative $L^2(H^1_0)$-norm, which leads to very similar relative errors.

\begin{figure}
	\centering
	\begin{subfigure}[b]{0.44\textwidth}
		\includegraphics[width=\textwidth]{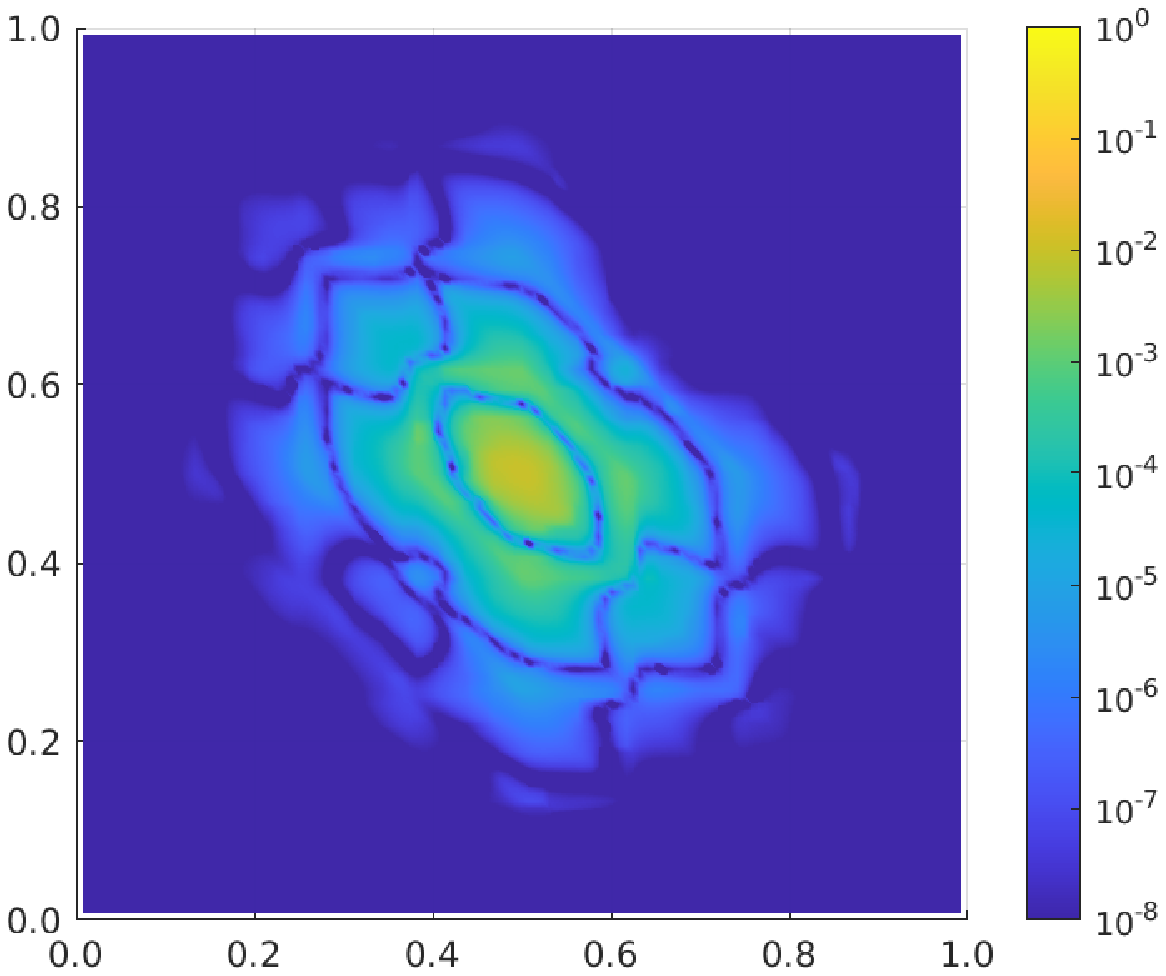}
		\caption{\small $\calQ\Lambda(\cdot,\tau)$}
	\end{subfigure}
	~ 
	\begin{subfigure}[b]{0.44\textwidth}
		\includegraphics[width=\textwidth]{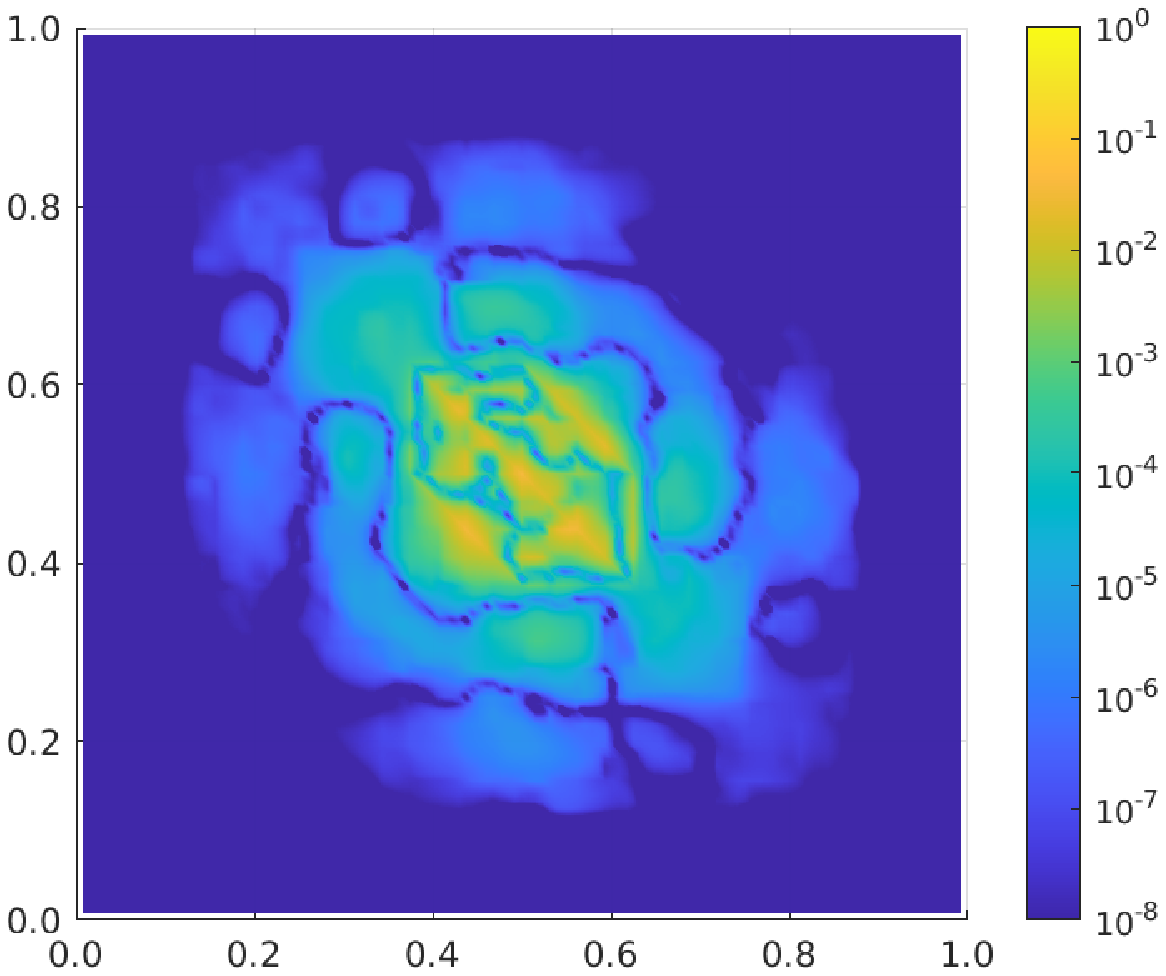}
		\caption{\small $\calQ\Lambda(\cdot,T_1+\tau)$}
	\end{subfigure} \\
	\begin{subfigure}[b]{0.44\textwidth}
		\includegraphics[width=\textwidth]{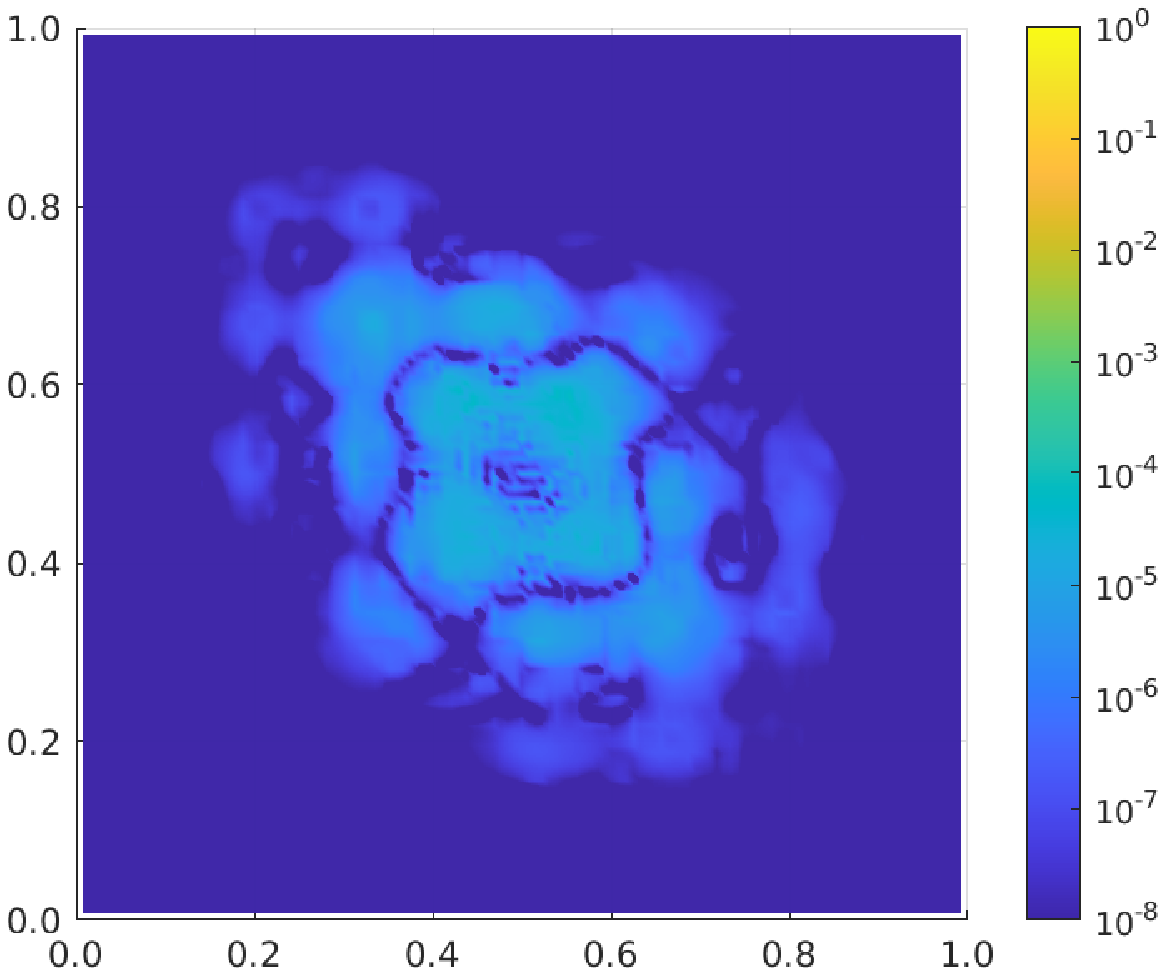}
		\caption{\small $\calQ\Lambda(\cdot,T_2+\tau)$}
	\end{subfigure}
	~ 
	\begin{subfigure}[b]{0.44\textwidth}
		\includegraphics[width=\textwidth]{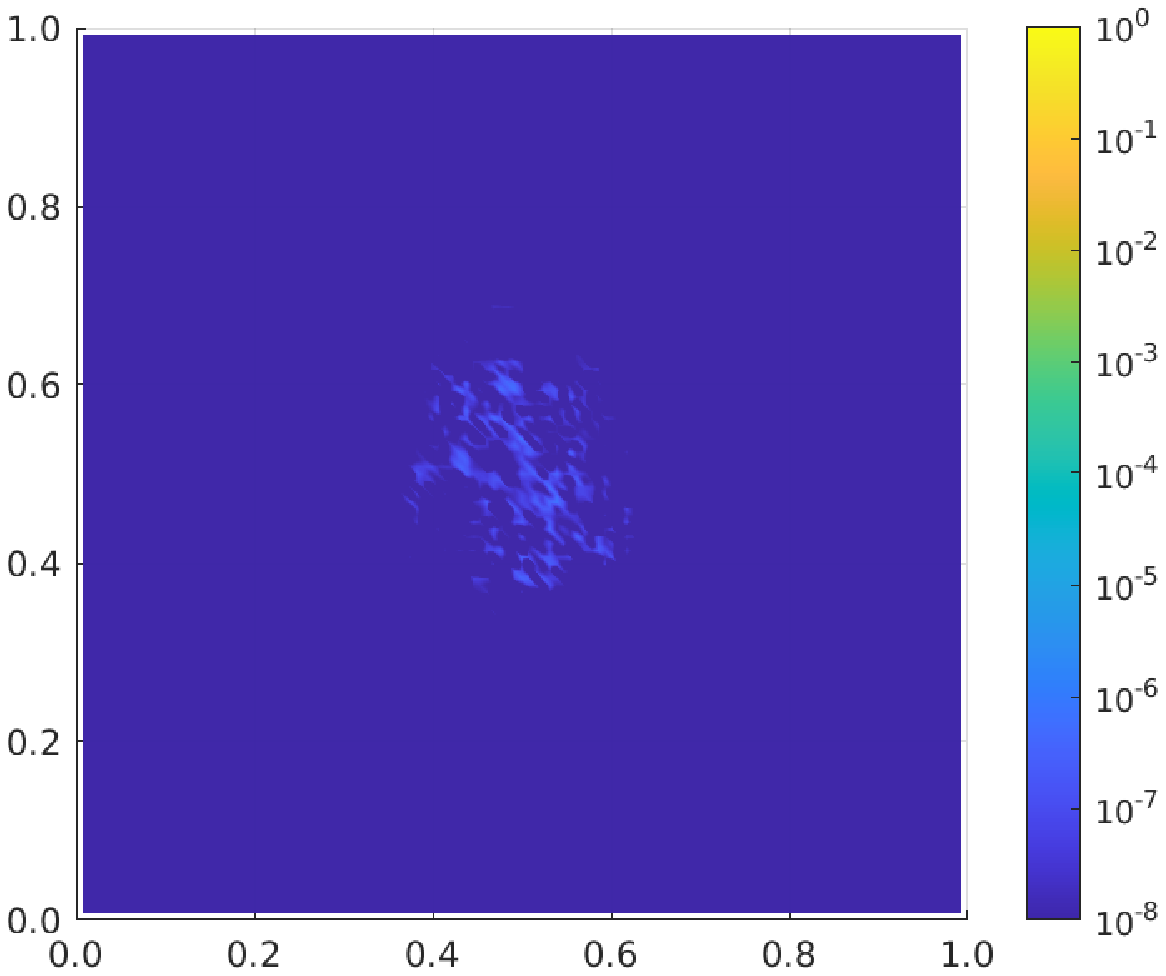}
		\caption{\small $\calQ\Lambda(\cdot,T_3+\tau)$}
	\end{subfigure}
	\caption{\small Illustration of the decay of a basis corrector $\calQ\Lambda$ in logarithmic scale.
	}\label{fig:basisdecay}
\end{figure}
	
\subsection{Multiple right-hand sides}\label{ss:rhs}

As a final example, we demonstrate how the method can be used to efficiently compute the solution to a new system in which the source function is different. We choose $H = \T = 2^{-4}$ and, as above, we set $k=|\log_2(H)|$ and $\ell=4$. 

To demonstrate the performance, we run the method for 8000 systems where the source function is randomized. In particular, it is chosen as
$f(t,x) = \tilde{f}(x) + a + bt + ct^2$, 
where $a,b,c$, and $\tilde{f}(x)$ are uniformly distributed random variables taking values from $0$ to $1$ for all nodes $x\in \mathcal{N}_h$ in the fine mesh. For each right-hand side, the relative error between the computed solution $\tuHTkl$ 
and the reference solution $\uht$  
is measured in the trial norm and stored. These 8000 errors are then plotted in a histogram, depicted in Figure~\ref{fig:error_full_method} (right). One observes that the values always fall in the range of $[0.0685,0.0845]$, which shows how the method can be consistently and reliably reused for new source functions. Once the coarse-scale representation is computed, for each right-hand side only $\NT$ matrix systems of size $\NH \times \NH$ need to be solved, where $\NH$ denotes the total number of degrees of freedom on the coarse scale. In contrast, using a combination of the Crank--Nicolson scheme combined with a classical finite element method (cf.~\eqref{eq:discretizedsol}) on a fine scale that resolves the oscillations in the coefficient would require $\Nt$ solutions of an $\Nh\times \Nh$ matrix system, where $\Nh$ and $\Nt$ are the fine global degrees of freedom in space and time, respectively. The method therefore provides a huge speed-up when multiple right-hand sides are considered.
	
\section{Conclusions}\label{s:conclusion}

In this work, we have presented and analyzed a space-time multiscale method for a parabolic model problem where the diffusion coefficient is highly oscillatory in both space and time. The proposed method is based on the framework of the Variational Multiscale Method and adopts ideas from the Localized Orthogonal Decomposition method. The approach computes a coarse-scale representation of the differential operator which appropriately incorporates fine-scale features by so-called corrections. These corrections are completely independent of each other and allow for parallel computations in space and in time. The coarse representation comes along with great approximation properties, even in the under-resolved scheme, where oscillations in the coefficient are not resolved, and allows us to efficiently compute approximations for multiple different right-hand sides. We have proved first-order convergence for an ideal method and illustrated the exponential decay of the corrections in both space and time, which motivated a localized version of the method that is computationally very efficient. We have showed a posteriori estimates for the localization error and presented an error estimate for our localized multiscale method. Finally, numerical examples have been provided that demonstrate the decay properties and the convergence of the proposed method. 

While the presented theoretical estimates and numerical considerations certainly show the potential of our approach, a remaining question is still whether reliable a priori localization estimates can be shown. This is addressed in future research.

\begin{figure}
	\begin{center}
		\scalebox{0.7}{
			\definecolor{mycolor1}{rgb}{0.00000,0.44700,0.74100}%
			\definecolor{mycolor2}{rgb}{0.85000,0.32500,0.09800}%
			\definecolor{mycolor3}{rgb}{0.9290,0.6940,0.1250}
			\begin{tikzpicture}
				
				\begin{axis}[%
					scale=0.72,
					width=4.854in,
					height=3.682in,
					at={(0.814in,0.516in)},
					scale only axis,
					xmin=1.85,
					xmax=6.14,
					xtick={2,3,4,5,6},
					xticklabels={\large{$\text{2}^{-\text{2}}$},\large{$\text{2}^{-\text{3}}$},\large{$\text{2}^{-\text{4}}$},\large{$\text{2}^{-\text{5}}$},\large{$\text{2}^{-\text{6}}$}},
					xlabel style={font=\color{white!15!black}},
					xlabel={\large $\T=H$},
					ymin=-7.5,
					ymax=-0.8,
					ytick={-7,-6,-5,-4,-3,-2,-1},
					yticklabels={\large{$\text{2}^{-\text{7}}$},\large{$\text{2}^{-\text{6}}$},\large{$\text{2}^{-\text{5}}$},\large{$\text{2}^{-\text{4}}$},\large{$\text{2}^{-\text{3}}$},\large{$\text{2}^{-\text{2}}$},\large{$\text{2}^{-\text{1}}$}},
					ylabel style={font=\color{white!15!black}},
					axis background/.style={fill=white},
					xmajorgrids,
					ymajorgrids,
					legend style={legend cell align=left, align=left, draw=white!15!black},
					legend style={row sep=0.2cm}
					]
					\addplot [color=mycolor1, mark=o, mark options={solid, mycolor1},
					very thick,mark size = 5]
					table[row sep=crcr]{%
						2	-1.2807\\
						3	-2.6135\\
						4	-3.8741\\
						5	-5.3452\\
						6	-7.1380\\
					};
					\addlegendentry{\Large \,$\|\cdot\|_{\Vtr}$}
					\addplot [color=mycolor3, mark=x, mark options={solid, mycolor2},
					very thick,mark size = 5]
					table[row sep=crcr]{%
						2	-1.2536\\
						3	-2.5960\\
						4	-3.7700\\
						5	-5.3047\\
						6	-7.0589\\
					};
					\addlegendentry{\Large\,$\|\cdot\|_{L^2(H^1_0)}$}
					\addplot [color=black, dotted,very thick]
					table[row sep=crcr]{%
						2	-2\\
						3	-3\\
						4	-4\\
						5	-5\\
						6	-6\\
					};
					\addlegendentry{\Large \,order 1}
				\end{axis}
			\end{tikzpicture}%
		}
		\hfill
		\scalebox{.63}{
			\begin{tikzpicture}
				\definecolor{mycolor1}{rgb}{0.00000,0.44700,0.74100}%
				\begin{axis}[%
					scale=0.83,
					scaled ticks=false, 
					tick label style={/pgf/number format/fixed},
					width=4.602in,
					height=3.506in,
					at={(0.772in,0.473in)},
					scale only axis,
					xmin=0.06775,
					xmax=0.08425,
					xlabel style={font=\color{white!15!black}},
					xlabel={\Large rel.~trial error},
					xticklabel style={/pgf/number format/precision=3},
					ymin=0,
					ymax=600,
					ylabel style={font=\color{white!15!black}},
					ylabel={\Large $\#$rhs},
					axis background/.style={fill=white}
					]
					\addplot[ybar interval, fill=mycolor1, fill opacity=0.6, draw=black, area legend] table[row sep=crcr] {%
						x	y\\
						0.0685	4\\
						0.069	19\\
						0.0695	50\\
						0.07	119\\
						0.0705	174\\
						0.071	229\\
						0.0715	271\\
						0.072	348\\
						0.0725	424\\
						0.073	451\\
						0.0735	482\\
						0.074	512\\
						0.0745	558\\
						0.075	542\\
						0.0755	548\\
						0.076	478\\
						0.0765	426\\
						0.077	383\\
						0.0775	331\\
						0.078	293\\
						0.0785	265\\
						0.079	253\\
						0.0795	179\\
						0.08	176\\
						0.0805	118\\
						0.081	109\\
						0.0815	98\\
						0.082	74\\
						0.0825	45\\
						0.083	30\\
						0.0835	9\\
						0.084	2\\
						0.0845	2\\
					};
				\end{axis}
			\end{tikzpicture}%
		}
	\end{center}
	\caption{\small Relative error with respect to different $H = \scalebox{.9}{$\mathcal{T}$}$ (left) and for fixed $H = \scalebox{.9}{$\mathcal{T}$} = 2^{-5}$ and 8000 source functions in a histogram (right).}
	\label{fig:error_full_method}
\end{figure}
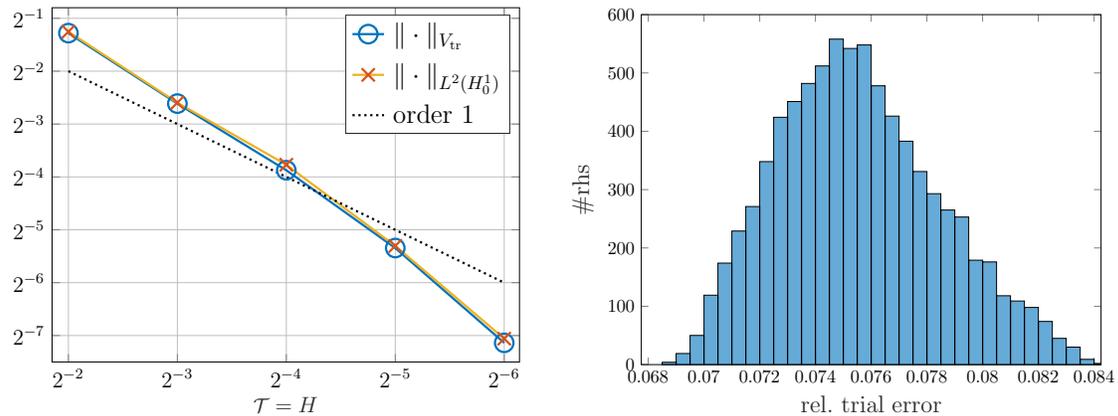

\subsection*{Acknowledgments} 
The authors acknowledge support by the G\"oran Gustafsson Foundation for Research in Natural Sciences and Medicine. The last author is also supported by the Swedish Research Council, project number 2019-03517\_VR.

\end{document}